\newcolumntype{M}[1]{>{\raggedright}m{#1}}
\DeclareMathAlphabet{\mathpzc}{OT1}{pzc}{m}{it}
\newtheorem{theorem}{Theorem}[section]
\newtheorem{proposition}[theorem]{Proposition}
\newtheorem{observation}[theorem]{Observation}
\theoremstyle{definition}
\newtheorem{definition}[theorem]{Definition}
\newtheorem{example}[theorem]{Example}
\theoremstyle{remark}
\newtheorem{remark}[theorem]{Remark}
\numberwithin{equation}{section}
\begin{document}

\title{On the fundamental groups of non-generic $\mathbb{R}$-join-type curves}

\author{Christophe Eyral and Mutsuo Oka}

\address{C. Eyral, Institute of Mathematics, Polish Academy of Sciences, ul. \'Sniadeckich 8, P.O. Box 21, 00-956 Warsaw, Poland}  
\email{eyralchr@yahoo.com} 
\address{M. Oka, Department of Mathematics, Tokyo University of Science, 1-3 Kagurazaka, Shinjuku-ku, Tokyo 162-8601, Japan}   
\email{oka@rs.kagu.tus.ac.jp}

\thanks{This research was partially supported by the \emph{Institut des Hautes \'Etudes Scientifiques}, Bures-sur-Yvette, France.}

\subjclass[2010]{14H30 (14H20, 14H45, 14H50).}
\keywords{Plane curves, fundamental group, bifurcation graph, monodromy, Zariski--van Kampen's pencil method.}

\dedicatory{Dedicated to S. Papadima and A. Dimca for their 60th birthday}

\begin{abstract}
An \emph{$\mathbb{R}$-join-type curve} is a curve in $\mathbb{C}^2$ defined by an equation of the form
\begin{equation*}
a\cdot\prod_{j=1}^\ell (y-\beta_j)^{\nu_j} =
b\cdot\prod_{i=1}^m (x-\alpha_i)^{\lambda_i},
\end{equation*}
where the coefficients $a$, $b$, $\alpha_i$ and $\beta_j$ are \emph{real} numbers. For generic values of $a$ and $b$, the singular locus of the curve consists of the points $(\alpha_i,\beta_j)$ with $\lambda_i,\nu_j\geq 2$ (so-called \emph{inner} singularities). In the non-generic case, the inner singularities are not the only ones: the curve may also have \emph{`outer'} singularities. The fundamental groups of (the complements of) curves having only inner singularities are considered in \cite{O}. In the present paper, we investigate the fundamental groups of a special class of curves possessing outer~singularities.
\end{abstract}

\maketitle

\markboth{C. Eyral and M. Oka}{On the fundamental groups of non-generic $\mathbb{R}$-join-type curves}  

\section{Introduction}\label{intro}

Let $\nu_1,\ldots,\nu_\ell,\lambda_1,\ldots,\lambda_m$ be positive integers. Denote by $\nu_0$ (respectively, $\lambda_0$) the greatest common divisor of $\nu_1,\ldots,\nu_\ell$ (respectively, of $\lambda_1,\ldots,\lambda_m$). Set $d:=\sum_{j=1}^\ell \nu_j$ and $d':=\sum_{i=1}^m \lambda_i$.
A curve $C$ in $\mathbb{C}^2$ is called a \emph{join-type curve with exponents} $(\nu_1,\ldots,\nu_\ell;\lambda_1,\ldots,\lambda_m)$
if it is defined by an equation of the form $f(y)=g(x)$, where
\begin{equation}\label{defining.equation}
f(y):=a\cdot\prod_{j=1}^\ell (y-\beta_j)^{\nu_j} 
\quad\mbox{and}\quad 
g(x):=b\cdot\prod_{i=1}^m (x-\alpha_i)^{\lambda_i}.
\end{equation}
Here, $a$ and $b$ are non-zero complex numbers, and $\beta_1,\ldots,\beta_\ell$ (respectively, $\alpha_1,\ldots,\alpha_m$) are mutually distinct complex numbers.
We say that $C$ is an $\mathbb{R}$-join-type curve if the coefficients $a$, $b$, $\alpha_i$ ($1\leq i\leq m$) and $\beta_j$ ($1\leq j\leq \ell$) are \emph{real} numbers.

The singular points of $C$ (i.e., the points $(x,y)$ satisfying $f(y)=g(x)$ and $f'(y)=g'(x)=0$) divide into two categories: the points $(x,y)$ which also satisfy the equations $f(y)=g(x)=0$, and those for which $f(y)\not=0$ and $g(x)\not=0$.
Clearly, the singular points contained in the intersection of lines $f(y)=g(x)=0$ are the points $(\alpha_i,\beta_j)$~with $\lambda_i,\nu_j\geq 2$. Hereafter, such singular points will be called \emph{inner} singularities, while the singular points $(x,y)$ with $f(y)\not=0$ and $g(x)\not=0$ will be called \emph{outer} or \emph{exceptional} singularities. It is easy to see that the singular points of a join-type curve are Brieskorn--Pham singularities~$\mathbf{B}_{\nu,\lambda}$ (normal form $y^{\nu}-x^{\lambda}$). For example, inner singularities are of type $\mathbf{B}_{\nu_j,\lambda_i}$. In the case of $\mathbb{R}$-join-type curves, we shall see, more specifically, that outer singularities can be only node singularities (i.e., Brieskorn--Pham singularities of type $\mathbf{B}_{2,2}$).

Clearly, for generic values of $a$ and $b$, under any fixed choice of the coefficients $\alpha_i$ ($1\leq i\leq m$) and $\beta_j$ ($1\leq j\leq \ell$), the curve $C$ has only inner singularities. In this case, it is shown in \cite{O} that the fundamental group $\pi_1(\mathbb{C}^2\setminus C)$ is isomorphic to the group $G(\nu_0;\lambda_0)$ obtained by taking $p=\nu_0$ and $q=\lambda_0$ in the presentation (\ref{presentationGpq}) below.
(In \cite{O} it is assumed that $d=d'$ but the same proof works for $\pi_1(\mathbb{C}^2\setminus C)$ and $d\not=d'$.)
For example, if $C$ has only inner singularities and if $\lambda_0$ or $\nu_0$ is equal to 1, then $\pi_1(\mathbb{C}^2 \setminus C)\simeq\mathbb{Z}$. 

In the present paper, we prove that the result of \cite{O} extends to~certain $\mathbb{R}$-join-type curves possessing \emph{outer} singularities.\footnote{Note that if $C$ is a join-type curve with \emph{non-real} coefficients and with only inner singularities, then it can always be deformed to an $\mathbb{R}$-join-type curve $C_1$ by a deformation $\{C_t\}_{0\leq t\leq 1}$ such that $C_0=C$ and $C_t$ is a join-type curve with only inner singularities and with the same exponent as $C$ (cf.~\cite{O}). (In particular, the topological type of $C_t$ (respectively, $\mathbb{C}^2\setminus C_t$) is independent of $t$.) For curves possessing outer singularities, this is no longer true in general.}  These curves are~defined as follows. Let $C$ be an $\mathbb{R}$-join-type curve. Then, without loss of generality, we can assume that the real numbers $\alpha_i$ ($1\leq i\leq m$) and $\beta_j$ ($1\leq j\leq \ell$) are indexed so that $\alpha_1<\ldots <\alpha_m$ and $\beta_1<\ldots <\beta_\ell$. 
Then, by considering the restriction of the function $g(x)$ to the real numbers, we see easily that the equation $g'(x)=0$ has at least one real root $\gamma_i$ in the open interval $(\alpha_i,\alpha_{i+1})$ for each $i=1,\ldots, m-1$. Since the degree~of 
\begin{displaymath}
g'(x)\bigg/ \prod_{i=1}^m (x-\alpha_i)^{\lambda_i-1}
\end{displaymath}
is $m-1$, it follows that the roots of $g'(x)=0$ are exactly $\gamma_1, \ldots, \gamma_{m-1}$ and the coefficients $\alpha_i$ with $\lambda_i\geq 2$ (cf.~Figure \ref{generalgraph}). In particular, this shows that $\gamma_1, \ldots, \gamma_{m-1}$ are \emph{simple} roots of $g'(x)=0.$ Similarly, the equation $f'(y)=0$ has $\ell-1$ simple roots $\delta_1,\ldots,\delta_{\ell-1}$ such that $\beta_j<\delta_j<\beta_{j+1}$ for each $j=1,\ldots,\ell-1$. The other roots of $f'(y)=0$ are the coefficients $\beta_j$ with $\nu_j\geq 2$. (They are simple for $\nu_j=2$.)

\begin{figure}[H]
\includegraphics[width=12cm,height=3.5cm]{./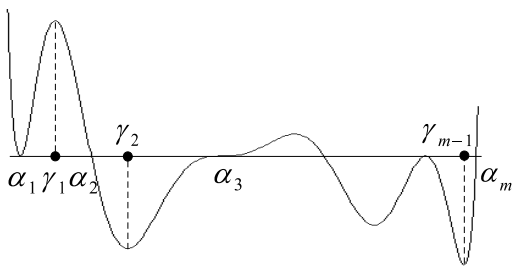}
\caption{Real graph of $g$}
\label{generalgraph}
\end{figure}

We fix the following terminology.

\begin{definition}
\begin{enumerate}
\item
We say that the curve $C$ is \emph{generic} if it has only inner singularities. In other words, $C$ is generic if and only if, for any $1\leq i\leq m-1$, $g(\gamma_i)$ is a regular value for $f$ (i.e., $g(\gamma_i)\not=f(\delta_j)$ for any $1\leq j\leq \ell-1$).  (Of course, this is also equivalent to the condition that, for any $1\leq j\leq \ell-1$, $f(\delta_j)$ is a regular value for $g$.)
\item
We say that $C$ is \emph{semi-generic with respect to $g$} if there exists an integer $i_0$ ($1\leq i_0\leq m$) such that $g(\gamma_{i_0-1})$ and $g(\gamma_{i_0})$ are regular values for $f$. (For $i_0=1$, this condition reduces to $g(\gamma_1)\notin\mathscr{V}_{\mbox{\tiny crit}}(f)$, and for $i_0=m$, it reduces to $g(\gamma_{m-1})\notin\mathscr{V}_{\mbox{\tiny crit}}(f)$, where $\mathscr{V}_{\mbox{\tiny crit}}(f)$ is the set of critical values of $f$.) The semi-genericity \emph{with respect to $f$} is defined similarly by exchanging the roles of $f$ and~$g$.
\end{enumerate}
\end{definition}

\begin{figure}[H]
\includegraphics[width=12cm,height=3.5cm]{./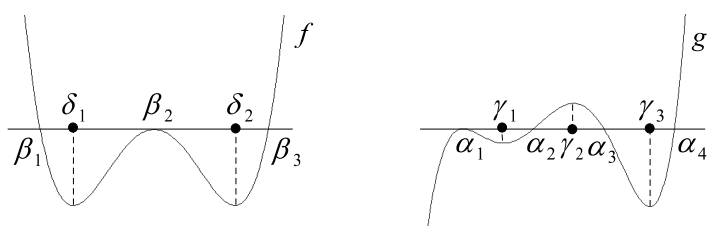}
\caption{Semi-genericity with respect to $g$ ($i_0=1$ or $2$)}
\label{generalgraph2}
\end{figure}

\begin{remark}
It is obvious that a generic curve is also semi-generic with respect to both $g$ and $f$, while the converse is not true.
Also, note that $C$ can be semi-generic with respect to $g$ without being semi-generic with respect to $f$. For example, consider the curve defined by the polynomials $f$ and $g$ given in Figure \ref{generalgraph2}, where $f(\delta_1)=f(\delta_2)=g(\gamma_3)$. 
\end{remark}

Here is our main result.

\begin{theorem}\label{mt}
Let $C$ be an $\mathbb{R}$-join-type curve in $\mathbb{C}^2$ defined by the equation $f(y)=g(x)$, where $f$ and $g$ are as in (\ref{defining.equation}). If $C$ is semi-generic with respect to $g$, then 
\begin{displaymath}
\pi_1(\mathbb{C}^2\setminus C)\simeq G(\nu_0;\lambda_0),
\end{displaymath}
where, as above, $\nu_0:=\gcd(\nu_1,\ldots,\nu_\ell)$, $\lambda_0:=\gcd(\lambda_1,\ldots,\lambda_m)$ 
and $G(\nu_0;\lambda_0)$ is the group obtained by taking $p=\nu_0$ and $q=\lambda_0$ in the presentation (\ref{presentationGpq}). \par
Furthermore, if $\widetilde C$ is the projective closure of $C$, then 
\begin{equation*}
\pi_1(\mathbb{P}^2\setminus \widetilde C)\simeq
\left\{
\begin{aligned}
& G(\nu_0;\lambda_0;d/\nu_0) & \mbox{if }\  d\geq d',\\
& G(\lambda_0;\nu_0;d'/\lambda_0) & \mbox{if }\  d'\geq d,
\end{aligned}
\right.
\end{equation*}
where $G(\nu_0;\lambda_0;d/\nu_0)$ (respectively, $G(\lambda_0;\nu_0;d'/\lambda_0)$) is the group obtained by taking $p=\nu_0$, $q=\lambda_0$ and $r=d/\nu_0$ (respectively, $p=\lambda_0$, $q=\nu_0$ and $r=d'/\lambda_0$) in the presentation (\ref{presentationGpqr}).
\end{theorem}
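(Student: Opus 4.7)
The plan is to apply the Zariski--van Kampen pencil method to the pencil of vertical lines $L_t=\{x=t\}_{t\in\mathbb{C}}$. A generic fiber $L_t\cap C$ consists of $d$ distinct points, so $\pi_1(L_t\setminus C)$ is free of rank $d$ on meridians $\omega_1,\ldots,\omega_d$, and the theorem reduces to listing the monodromy relations attached to each bifurcation value. These split into two families: (a) the abscissae $\alpha_i$ with $\lambda_i\geq 2$, over which $C$ has the inner singularities $\mathbf{B}_{\nu_j,\lambda_i}$ at the points $(\alpha_i,\beta_j)$ (for $\nu_j\geq 2$); (b) the critical values $\gamma_1,\ldots,\gamma_{m-1}$ of $g$, over which $C$ has a real vertical tangency and, possibly, outer node singularities, depending on whether $g(\gamma_i)$ is a regular value of $f$.

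I would then exploit the semi-genericity with respect to $g$ by choosing the base point $t_*$ of the pencil near the interval $(\gamma_{i_0-1},\gamma_{i_0})$ supplied by the hypothesis, so that the two adjacent bifurcation values correspond only to regular tangencies and honest nodes and the local braid monodromy there is as simple as possible. Using the real graph of $g$ (Figure~\ref{generalgraph}) to describe the braid monodromy along real paths, one reads off, at each $\alpha_i$ with $\lambda_i\geq 2$, the standard Brieskorn--Pham relations of $\mathbf{B}_{\nu_j,\lambda_i}$ at each inner singularity, and at each generic $\gamma_i$ a commutation relation coming from the corresponding outer node. Adapting the computation of \cite{O}, the inner relations assemble into the defining presentation of $G(\nu_0;\lambda_0)$. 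The main obstacle is to handle the `bad' $\gamma_i$ at which $g(\gamma_i)\in\mathscr{V}_{\mbox{\tiny crit}}(f)$, where an outer singularity collides with a tangency and the local braid word becomes more involved. I would resolve this by a controlled real deformation $\{C_s\}$ with $C_0=C$ and $C_s$ semi-generic with respect to $g$ for every $s$ (perturbing only the $\beta_j$'s, with the $\alpha_i$'s and the exponents fixed): this splits each bad fiber into a regular tangency plus disjoint outer nodes, and one then verifies directly that the commutators arising from these nodes are already trivial in $G(\nu_0;\lambda_0)$, so that the bad-bifurcation relations are consequences of the good ones.

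For the projective statement, apply the standard quotient: $\pi_1(\mathbb{P}^2\setminus\widetilde C)$ is obtained from $\pi_1(\mathbb{C}^2\setminus C)$ by imposing that a suitable product of meridians (the `meridian at infinity') is trivial. The intersection of $\widetilde C$ with the line at infinity, which is controlled by the sign of $d-d'$, determines the exponent of this extra relation: when $d\geq d'$ one obtains $r=d/\nu_0$ and the presentation $G(\nu_0;\lambda_0;d/\nu_0)$, while the symmetric argument (swapping the roles of $f$ and $g$, equivalently of the two coordinates of $\mathbb{C}^2$) yields $G(\lambda_0;\nu_0;d'/\lambda_0)$ when $d'\geq d$.
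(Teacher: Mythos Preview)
Your identification of the special lines of the vertical pencil is wrong, and this derails the monodromy computation. A line $L_t\colon x=t$ is special when it meets $C$ non-transversally, i.e., when $f'(y)=0$ and $f(y)=g(t)$ for some $y$; hence the special values of $t$ are the $\alpha_i$ (coming from $y=\beta_j$ with $\nu_j\geq 2$) together with the roots $\gamma_{j,k}$ of $g(x)=f(\delta_j)$ for $1\leq j\leq\ell-1$. The critical points $\gamma_i$ of $g$ are \emph{not} bifurcation values of this pencil unless $g(\gamma_i)$ happens to equal some $f(\delta_j)$, in which case $\gamma_i$ coincides with one of the $\gamma_{j,k}$ and $L_{\gamma_i}$ passes through an outer node. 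In particular there is no vertical tangency over a generic $\gamma_i$, and no ``commutation relation from the corresponding outer node'' there, because there is no node and $L_{\gamma_i}$ is a generic fibre. The tangent fibres are governed by the critical points of $f$, not of $g$.

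The paper's route is different from what you sketch. First, one reads off the monodromy at $L_{\alpha_{i_0}}$ (local models $y^{\nu_j}=x^{\lambda_{i_0}}$) and at the simple-tangent lines $L_{\gamma_{j,k}}$ lying on the branches of the \emph{regular} satellite $\Gamma(\alpha_{i_0})$; the tangent relations identify all the packets $\xi_{j,k}$ with a single family $\xi_k$ and one obtains $\xi_{k+\lambda_{i_0}}=\xi_k$ and $\xi_{k+\nu_0}=\omega\xi_k\omega^{-1}$. One then transports the generators along the real axis through the (possibly non-regular) satellites and shows, using the identifications already in hand, that the monodromy at each $L_{\alpha_i}$ contributes $\xi_{k+\lambda_i}=\xi_k$. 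This exhibits $\pi_1(\mathbb{C}^2\setminus C)$ as a \emph{quotient} of $G(\nu_0;\lambda_0)$. The opposite bound is not obtained by dissecting the bad fibres but by the degeneration principle applied to the family $C_t\colon f(y)=(1-t)g(x)$, which is generic for $0<t\ll 1$: one gets a surjection $\pi_1(\mathbb{C}^2\setminus C)\twoheadrightarrow \pi_1(\mathbb{C}^2\setminus C_t)\simeq G(\nu_0;\lambda_0)$ carrying $\xi_k\mapsto\xi_k(t)$, and since the target is already presented by the relations found above, the kernel is trivial. Your proposed perturbation of the $\beta_j$'s does not close this gap: making $C$ \emph{less} singular gives a surjection in the wrong direction for bounding $\pi_1(\mathbb{C}^2\setminus C)$ from above, and you have not shown that the monodromy relations of $C$ at the bad $\gamma_{j,k}$'s are consequences of the good ones.
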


\begin{remark}\label{ramt}
The conclusions of Theorem \ref{mt} are still valid if we suppose that $C$ is semi-generic with respect to $f$. This is an immediate consequence of the theorem itself and Proposition \ref{isoGpqGqp} below.
\end{remark}

\begin{example}
With the same hypotheses as in Theorem \ref{mt}, if $d$ is a prime number and $\ell\geq 2$, then $\nu_0=1$, and hence $\pi_1(\mathbb{C}^2\setminus C)$ is isomorphic to $G(1;\lambda_0)\simeq\mathbb{Z}$ while $\pi_1(\mathbb{P}^2\setminus \widetilde C)$ is isomorphic to $\mathbb{Z}_d$ or $\mathbb{Z}_{d'}$ depending on whether $d\geq d'$ or $d'\geq d$. (Of course, if $d'$ is a prime number and $m\geq 2$, then $\lambda_0=1$, and we get the same conclusion.) 
\end{example}

\section{The groups $G(p;q)$ and $G(p;q;r)$}\label{groups}

Let $p,q,r$ be positive integers. In this section, we recall the definitions and collect the basic properties of the groups $G(p;q)$ and $G(p;q;r)$ introduced in \cite{O} and which appear in Theorem \ref{mt} as the fundamental groups of the affine and the projective semi-generic $\mathbb{R}$-join-type curves, respectively. 
 
The group $G(p;q)$ is defined by the presentation
\begin{equation}\label{presentationGpq}
\langle\, \omega,\, a_k \ (k\in\mathbb{Z}) \mid 
\omega=a_{p-1}a_{p-2}\ldots a_0,\ \mathscr{R}_{q,k},\ \mathscr{R}'_{p,k}\ (k\in\mathbb{Z})\, \rangle,
\end{equation}
where 
\begin{align*}
& \mathscr{R}_{q,k}\colon a_{k+q}=a_k \mbox{ (periodicity relation)};\\
& \mathscr{R}'_{p,k}\colon a_{k+p}=\omega a_{k} \omega^{-1} \mbox{ (conjugacy relation)}.
\end{align*} 

The following proposition is used to show that the conclusions of Theorem \ref{mt} still hold if we suppose that $C$ is semi-generic with respect to $f$ (cf.~Remark \ref{ramt}).

\begin{proposition}\label{isoGpqGqp}
The groups $G(p;q)$ and $G(q;p)$ are isomorphic.
\end{proposition}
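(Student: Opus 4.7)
First, I would simplify each presentation by eliminating $\omega$ and collapsing the infinite family of generators. In $G(p;q)$, the periodicity relation $a_{k+q}=a_k$ reduces the generators to $a_0,\ldots,a_{q-1}$, and $\omega$ can be substituted away by $\omega=a_{[p-1]_q}\cdots a_{[1]_q}a_0$, where $[\cdot]_q$ denotes the residue modulo $q$. The conjugacy relations $a_{k+p}=\omega a_k\omega^{-1}$ then become a family of $q$ relations saying that conjugation by $\omega$ acts on the generator set as the cyclic shift $a_k\mapsto a_{[k+p]_q}$. The same reduction, with $p$ and $q$ swapped, applies to $G(q;p)$, yielding generators $b_0,\ldots,b_{p-1}$ and conjugation by $\tau=b_{[q-1]_p}\cdots b_0$ acting as shift by $q$ modulo $p$.

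Second, I would construct an explicit homomorphism $\Phi\colon G(q;p)\to G(p;q)$ by assigning to each $b_k$ (for $k=0,\ldots,p-1$) a word $\Phi(b_k)$ in $a_0,\ldots,a_{q-1}$ together with appropriate $\omega$-conjugates, and then verifying that these images satisfy the $G(q;p)$-relations. The correct formula is guided by the topological picture implicit in the introduction: both $G(p;q)$ and $G(q;p)$ arise as presentations of $\pi_1(\mathbb{C}^2\setminus C)$ for a suitable (semi-)generic join-type curve $C$, computed respectively from the $x$-pencil and the $y$-pencil, so the desired formula should be a \emph{change-of-pencil} recipe expressing the $y$-meridians in terms of the $x$-meridians. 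The crucial verification is then that the element $\tau':=\Phi(b_{[q-1]_p})\cdots\Phi(b_0)$, extended by periodicity via $\Phi(b_{k+p}):=\Phi(b_k)$, conjugates $\Phi(b_k)$ to $\Phi(b_{k+q})$; after expansion this reduces to iterated application of the $G(p;q)$-relations. The same recipe in the opposite direction produces $\Psi\colon G(p;q)\to G(q;p)$, and one checks $\Psi\circ\Phi=\mathrm{id}$ and $\Phi\circ\Psi=\mathrm{id}$ on generators, which should follow from the same combinatorial identities.

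The main obstacle is identifying the formula for $\Phi$ and then proving that conjugation by $\tau'$ implements the prescribed shift-by-$q$ action on the $\Phi(b_k)$'s. Expanding $\tau'$ as a product of $q$ conjugated $a$-words and tracking the resulting conjugation involves long and carefully ordered word manipulations in which the roles of $p$ and $q$ are asymmetric at intermediate stages and only reconcile after full simplification. An alternative and arguably cleaner route is to realize both groups as the fundamental group of a single $2$-complex equipped with two different pencil-type cell decompositions, and to read off the isomorphism as the change of decomposition; executing this rigorously still demands an equivalent combinatorial check, but the geometric language makes the symmetry $p\leftrightarrow q$ manifest from the outset.
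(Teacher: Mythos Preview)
Your main proposal is a strategy, not a proof: you never write down the formula for $\Phi(b_k)$, and you explicitly flag the verification that $\tau'$ conjugates $\Phi(b_k)$ to $\Phi(b_{k+q})$ as ``the main obstacle'' without carrying it out. As stated, nothing is actually established; you would still owe the reader both the change-of-pencil formula and the word computation showing the relations are preserved, and the paper itself warns that ``from a purely algebraic point of view, this proposition is not obvious.''

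The paper bypasses all of this with a two-line geometric argument, essentially the ``alternative route'' you mention at the end but executed in one stroke: by the cited result \cite{O}, for the generic join-type curve $C\colon y^p=x^q$ one has $\pi_1(\mathbb{C}^2\setminus C)\simeq G(p;q)$, and after interchanging the roles of $x$ and $y$ the same result gives $\pi_1(\mathbb{C}^2\setminus C)\simeq G(q;p)$. No explicit $\Phi$, no word manipulations; the symmetry is realized by swapping coordinates on a single space. Your instinct that the geometric picture is the right guide was correct---the paper simply uses it directly rather than translating it back into combinatorics.
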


\begin{proof}
From a purely algebraic point of view, this proposition is not obvious. However, by \cite{O}, we know that if $C$ is the generic join-type curve $y^p=x^q$, then $\pi_1(\mathbb{C}^2\setminus C)\simeq G(p;q)$. Now, by exchanging the roles of $y$ and $x$, we also have $\pi_1(\mathbb{C}^2\setminus C)\simeq G(q;p)$. 
\end{proof}

The following proposition will be useful to prove Theorem \ref{mt}.

\begin{proposition}[cf.~\cite{O}]\label{prop26}
The relations $\mathscr{R}'_{p,k}$ ($k\in\mathbb{Z}$) and $\omega=a_{p-1}a_{p-2}\ldots a_0$ imply the following new relation for any $k\in\mathbb{Z}$:
\begin{align*}
\omega=a_k a_{k-1}\ldots a_{k-p+1}.
\end{align*} 
\end{proposition}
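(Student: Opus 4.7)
The plan is to prove the statement by induction on $k$, starting from the base case $k = p-1$ (which is precisely the defining relation $\omega = a_{p-1}a_{p-2}\cdots a_0$) and extending in both directions using the conjugacy relations $\mathscr{R}'_{p,k}$. Since every integer can be reached from $p-1$ by finitely many $\pm 1$ steps, it suffices to prove the two inductive implications: if the asserted formula holds at index $k$, then it holds at indices $k+1$ and $k-1$.

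For the step from $k$ to $k+1$, the key observation is that the relation $\mathscr{R}'_{p,k-p+1}$ can be written as $a_{k+1} = \omega\, a_{k-p+1}\, \omega^{-1}$. I would substitute this into the candidate product and then invoke the inductive hypothesis $\omega = a_k a_{k-1}\cdots a_{k-p+1}$, computing
\begin{align*}
a_{k+1}a_k\cdots a_{k-p+2}
&= \omega\, a_{k-p+1}\, \omega^{-1}\cdot a_k a_{k-1}\cdots a_{k-p+2}\\
&= \omega\, a_{k-p+1}\, \omega^{-1}\cdot (a_k a_{k-1}\cdots a_{k-p+2}a_{k-p+1})\, a_{k-p+1}^{-1}\\
&= \omega\, a_{k-p+1}\, \omega^{-1}\cdot \omega \cdot a_{k-p+1}^{-1} = \omega.
\end{align*}

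For the step from $k$ to $k-1$, I would use $\mathscr{R}'_{p,k-p+1}$ in the form $a_{k-p+1} = \omega^{-1} a_{k+1}\,\omega$ to insert a factor at the right end of the inductive hypothesis: starting from $\omega = a_k a_{k-1}\cdots a_{k-p+1}$, I would replace the last factor and rearrange, obtaining $\omega = a_{k-1}a_{k-2}\cdots a_{k-p}\cdot\omega\cdot\omega^{-1}\cdot$(something that cancels), leading to $\omega = a_{k-1}a_{k-2}\cdots a_{k-p}$. Concretely, since the forward direction gives $\omega = a_{k-1}\cdots a_{k-p}$ whenever $\omega = a_{k-2}\cdots a_{k-p-1}$ (by replacing $k$ with $k-2$ in the upward calculation), running the induction downward is symmetric.

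I do not expect any genuine obstacle here; the only thing to be careful about is that the conjugacy relations are used at the correct indices so that the conjugating element is exactly $\omega$ (and not some cyclic conjugate of it), and this is automatic because $\mathscr{R}'_{p,k}$ uses the same $\omega$ for every $k$. The periodicity relations $\mathscr{R}_{q,k}$ play no role in this argument.
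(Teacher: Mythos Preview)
Your argument is correct in substance. The forward inductive step is cleanly executed: using $\mathscr{R}'_{p,k-p+1}$ in the form $a_{k+1}=\omega\, a_{k-p+1}\,\omega^{-1}$ together with the inductive hypothesis, you correctly obtain $a_{k+1}a_k\cdots a_{k-p+2}=\omega$. For the backward step your index is off: the relation to invoke is $\mathscr{R}'_{p,k-p}$ (not $\mathscr{R}'_{p,k-p+1}$), which gives $a_{k-p}=\omega^{-1}a_k\,\omega$, and then
\[
a_{k-1}\cdots a_{k-p+1}\,a_{k-p}
= a_k^{-1}\bigl(a_k a_{k-1}\cdots a_{k-p+1}\bigr)\,\omega^{-1}a_k\,\omega
= a_k^{-1}\,\omega\,\omega^{-1}a_k\,\omega=\omega.
\]
(Alternatively, your forward computation is reversible, which immediately yields the downward induction.) Your closing remark that only the conjugacy relations $\mathscr{R}'_{p,k}$ are used, and not the periodicity relations, is correct and worth making.

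As for comparison with the paper: the present paper does not actually prove this proposition---it is quoted from \cite{O}. Your elementary two-sided induction is the natural argument and is presumably what appears in the original source.
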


It follows from this proposition that, for any $n\in\mathbb{Z}$, we can reorder the generators as $b_k:=a_{k+n}$ without changing the relations. That is, we have $\omega=b_{p-1}b_{p-2}\ldots b_0$, $b_{k+q}=b_k$ and $b_{k+p}=\omega b_{k} \omega^{-1}$ ($k\in\mathbb{Z}$). 

Now, let $q_1,\ldots,q_n$ be positive integers, and $G(p;\{q_1,\ldots,q_n\})$ the group defined by the presentation
\begin{align*}
\langle\, \omega,\, a_k \ (k\in\mathbb{Z}) \mid 
\omega=a_{p-1}a_{p-2}\ldots a_0,\ 
\mathscr{R}_{q_i,k},\ \mathscr{R}'_{p,k}\ (1\leq i\leq n,\, k\in\mathbb{Z})\, \rangle,
\end{align*}
where 
\begin{align*}
& \mathscr{R}_{q_i,k}\colon a_{k+q_i}=a_k.
\end{align*}

We shall also use the next result in the proof of Theorem \ref{mt}.

\begin{proposition}[cf.~\cite{O}]\label{prop29}
The group $G(p;\{q_1,\ldots,q_n\})$ is isomorphic to the group $G(p;q_0)$, where $q_0:=\gcd(q_1,\ldots,q_n)$.
\end{proposition}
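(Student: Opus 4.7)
The plan is to show that the identity map on the generators $\omega, a_k$ ($k\in\mathbb{Z}$) extends to an isomorphism between the two presentations. Both $G(p;\{q_1,\ldots,q_n\})$ and $G(p;q_0)$ share the generators, the single relation $\omega=a_{p-1}a_{p-2}\ldots a_0$, and the full family of conjugacy relations $\mathscr{R}'_{p,k}$; they differ only in the periodicity relations. So the content reduces to showing that, as sets of relators in the free group on these generators, the family $\{\mathscr{R}_{q_i,k}: 1\leq i\leq n,\, k\in\mathbb{Z}\}$ is equivalent to $\{\mathscr{R}_{q_0,k}: k\in\mathbb{Z}\}$. Since this equivalence will not use any of the other relations, the desired isomorphism follows at once from von Dyck's theorem applied in both directions.

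The easy direction is that $\mathscr{R}_{q_0,k}$ implies each $\mathscr{R}_{q_i,k}$: because $q_0\mid q_i$, iterating $a_{k+q_0}=a_k$ a total of $q_i/q_0$ times yields $a_{k+q_i}=a_k$. Hence every relation of $G(p;\{q_1,\ldots,q_n\})$ holds in $G(p;q_0)$.

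For the other direction I would invoke B\'ezout's identity to write $q_0=\sum_{i=1}^n c_iq_i$ with $c_i\in\mathbb{Z}$. The relations $a_{k+q_i}=a_k$ (for all $i,k$) allow us to shift the index of any $a_k$ by $\pm q_i$ without changing its value, so composing $|c_i|$-many such shifts of the appropriate sign gives
\begin{equation*}
a_{k+q_0}=a_{k+c_1q_1+\cdots+c_nq_n}=a_k,
\end{equation*}
which is exactly $\mathscr{R}_{q_0,k}$. Combined with the previous paragraph, this establishes the claimed equivalence of relator sets and hence the isomorphism.

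The only thing that might be called an obstacle is the use of B\'ezout, but this is really just the standard fact that the subgroup $\langle q_1,\ldots,q_n\rangle$ of $\mathbb{Z}$ equals $q_0\mathbb{Z}$; the periodicity relations simply identify generators whose indices lie in the same coset of this subgroup, so the two presentations produce the same identifications on the $a_k$. No subtle group-theoretic manoeuvre involving $\omega$ or the conjugacy relations is needed.
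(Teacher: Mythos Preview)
Your argument is correct: the two presentations share all generators and all relations except the periodicity relations, and you show cleanly that the family $\{\mathscr{R}_{q_i,k}\}$ is equivalent to $\{\mathscr{R}_{q_0,k}\}$ via divisibility in one direction and B\'ezout in the other. This is exactly the right content, and nothing involving $\omega$ or the conjugacy relations is needed.

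As for comparison: the paper does not actually supply a proof of this proposition here; it is quoted from \cite{O} without argument. Your proof is the natural elementary one and is presumably what \cite{O} does as well, so there is no meaningful difference to report.
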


The next proposition gives necessary and sufficient conditions for the group $G(p;q)$ to be abelian. Thus, it can be used to test the commutativity of the group $\pi_1(\mathbb{C}^2\setminus C)$ which appears in Theorem \ref{mt}.

\begin{proposition}[cf.~\cite{O}]\label{cor312}
The group $G(p;q)$ is abelian if and only if $q=1$ or $p=1$ or $p=q=2$. More precisely,
\begin{displaymath}
G(p;q)\simeq \left\{
\begin{aligned}
& \mathbb{Z} &&\mbox{if} && q=1 \mbox{ or } p=1; \\
& \mathbb{Z}\times\mathbb{Z} &&\mbox{if} && p=q=2.
\end{aligned}
\right.
\end{displaymath}
\end{proposition}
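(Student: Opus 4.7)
The plan is to verify the three sufficient conditions directly from the presentation, and then to establish the converse through the geometric identification $G(p;q) \simeq \pi_1(\mathbb{C}^2 \setminus C_{p,q})$ used implicitly in the proof of Proposition~\ref{isoGpqGqp}, where $C_{p,q}$ denotes the Brieskorn--Pham curve $y^p = x^q$.

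For the ``if'' direction, each case reduces the presentation by a short calculation. If $q = 1$, periodicity forces $a_k = a_0$ for every $k$, making the conjugacy relations tautological, and $G(p;1) = \langle a_0 \rangle \simeq \mathbb{Z}$. If $p = 1$, then $\omega = a_0$ and the conjugacy relation $a_{k+1} = a_0 a_k a_0^{-1}$ yields $a_k = a_0$ by induction, so again $G(1;q) \simeq \mathbb{Z}$. If $p = q = 2$, combining $a_{k+2} = a_k$ with $a_{k+2} = \omega a_k \omega^{-1}$ shows that $\omega$ is central; then $\omega = a_1 a_0$ together with $[\omega, a_0] = 1$ gives $[a_0, a_1] = 1$, so the presentation collapses to $\langle a_0, a_1 \mid [a_0, a_1] = 1 \rangle \simeq \mathbb{Z} \times \mathbb{Z}$.

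For the ``only if'' direction, assume $p, q \geq 2$ with $(p, q) \neq (2, 2)$; by the identification above it suffices to show that $\pi_1(\mathbb{C}^2 \setminus C_{p,q})$ is non-abelian. When $d := \gcd(p, q) = 1$, the curve $C_{p,q}$ is irreducible with an isolated Brieskorn--Pham singularity at the origin, so by Milnor's conic structure theorem the complement deformation retracts onto the complement of the $(p, q)$-torus knot in $S^3$; its fundamental group $\langle \alpha, \beta \mid \alpha^p = \beta^q \rangle$ admits the non-abelian quotient $\mathbb{Z}/p * \mathbb{Z}/q$ after killing the central element $\alpha^p$, and this free product is non-abelian whenever $p, q \geq 2$.

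The main obstacle lies in the non-coprime case $d \geq 2$: writing $p = dp'$, $q = dq'$ with $\gcd(p', q') = 1$, the curve splits as a union of $d$ concurrent components $y^{p'} = \zeta^j x^{q'}$ ($\zeta^d = 1$), so neither the conic-structure argument nor the torus-knot identification applies directly. When $p = q = d \geq 3$ the components are lines, and the complement of $d$ concurrent lines in $\mathbb{C}^2$ is well-known to have non-abelian fundamental group (a central extension of a free group of rank $d - 1$). The remaining sub-cases I would handle uniformly by running the Zariski--van~Kampen pencil method on the pencil of vertical lines $x = \mathrm{const}$: a direct analysis of the monodromy around the critical value $x = 0$ exhibits a non-trivial commutator among the resulting monodromy generators, thereby ruling out commutativity. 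Carrying out this uniform analysis cleanly for every sub-case is the delicate step.
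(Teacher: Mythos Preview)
The paper does not actually supply a proof of this proposition; it is stated with a reference to \cite{O}. So there is no argument in the paper to compare yours against, and your ``if'' direction is a perfectly acceptable verification from the presentation.

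Your ``only if'' direction, however, is genuinely incomplete. You handle $\gcd(p,q)=1$ via the torus-knot group and $p=q\geq 3$ via the line arrangement, but for the mixed case $d:=\gcd(p,q)\geq 2$ with $p\neq q$ (e.g.\ $(p,q)=(4,2)$ or $(6,4)$) you only announce that a Zariski--van~Kampen computation ``exhibits a non-trivial commutator'' and call it the delicate step. That is not a proof; it is a promissory note, and the monodromy computation for a multi-branch Brieskorn--Pham curve is exactly where a careless argument can go wrong.

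There is a much cheaper algebraic fix that makes the geometric detour unnecessary in the non-coprime case. Pass to the quotient $G(p;q;1)=G(p;q)/\langle\!\langle\omega\rangle\!\rangle$ (cf.\ the presentation~(\ref{presentationGpqr}) with $r=1$). Once $\omega=e$, the conjugacy relations force $a_{k+p}=a_k$; combined with $a_{k+q}=a_k$ this gives $a_{k+d}=a_k$, and the single surviving relation becomes $(a_{d-1}\cdots a_0)^{p/d}=e$. A Tietze move identifies this with $F_{d-1}*\mathbb{Z}_{p/d}$, which is non-abelian as soon as $d\geq 3$, or $d=2$ and $p/d\geq 2$. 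The only surviving case is $d=2$, $p=2$, and then applying the same argument to $G(q;p)$ (using Proposition~\ref{isoGpqGqp}) forces $q=2$ as well. Together with your torus-knot argument for $d=1$, this closes the gap without any pencil computation.
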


The group $G(p;q;r)$ is defined by the presentation
\begin{align}\label{presentationGpqr}
\langle\, \omega,\, a_k \ (k\in\mathbb{Z}) \mid 
\omega=a_{p-1}a_{p-2}\ldots a_0,\
\omega^r=e,\ \mathscr{R}_{q,k},\ \mathscr{R}'_{p,k}\ (k\in\mathbb{Z})\, \rangle,
\end{align}
where $e$ is the unit element. In other words, $G(p;q;r)$ is the quotient of $G(p;q)$ by the normal subgroup generated by $\omega^{r}$. 

The next proposition is an interesting special case.

\begin{proposition}[cf.~\cite{O}]
If $\gcd(p,q)=1$ and $r=q$, then $G(p;q;q)$ is isomorphic to the free product $\mathbb{Z}_p*\mathbb{Z}_q$.
\end{proposition}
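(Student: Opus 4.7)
The plan is to apply Tietze transformations to collapse the presentation~(\ref{presentationGpqr}) of $G(p;q;q)$ down to the standard presentation of $\mathbb{Z}_p*\mathbb{Z}_q$, using $\gcd(p,q)=1$ in a crucial way. Let $s$ denote the multiplicative inverse of $p$ modulo $q$, i.e., the unique $s\in\{0,\ldots,q-1\}$ with $ps\equiv 1\pmod q$.

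The first step is to eliminate every generator $a_k$ with $k\neq 0$. Iterating the conjugacy relation $\mathscr{R}'_{p,k}$ gives $a_{jp}=\omega^{j}a_0\omega^{-j}$ for all $j\in\mathbb{Z}$. Since $\gcd(p,q)=1$, multiplication by $p$ is a bijection on $\mathbb{Z}/q\mathbb{Z}$, so combining with the periodicity $\mathscr{R}_{q,k}$ and with $\omega^q=e$ (which lets us reduce $\omega$-exponents modulo $q$) forces
\[
a_k \;=\; \omega^{ks}\,a_0\,\omega^{-ks}\qquad(k\in\mathbb{Z}).
\]
Conversely, once these identities are adopted as \emph{definitions} of $a_k$ for $k\neq 0$, each $\mathscr{R}_{q,k}$ and $\mathscr{R}'_{p,k}$ becomes an immediate algebraic consequence of $\omega^q=e$, so no information is lost in the reduction. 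The presentation collapses to one with generators $a_0,\omega$ and relations $\omega^q=e$ and $\omega=a_{p-1}a_{p-2}\cdots a_0$.

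It remains to rewrite the long relation using the substitution just made. The consecutive interior terms $\omega^{-ks}\cdot\omega^{(k-1)s}=\omega^{-s}$ telescope, giving
\[
\omega \;=\; \prod_{k=p-1}^{0}\omega^{ks}a_0\omega^{-ks} \;=\; \omega^{(p-1)s}\bigl(a_0\omega^{-s}\bigr)^{p-1}a_0.
\]
Introducing the new generator $u:=a_0\omega^{-s}$ (equivalently $a_0=u\omega^{s}$) transforms the right-hand side into $\omega^{(p-1)s}u^{p}\omega^{s}$, and the relation becomes $u^p=\omega^{1-ps}$. Since $ps\equiv 1\pmod q$ and $\omega^q=e$, this is exactly $u^p=e$. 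Hence the presentation collapses to $\langle u,\omega\mid u^p=e,\,\omega^q=e\rangle$, which is the standard presentation of $\mathbb{Z}_p*\mathbb{Z}_q$. I expect the main obstacle to be the bookkeeping in the first step: one must verify that the family $\{a_k=\omega^{ks}a_0\omega^{-ks}\}_{k\in\mathbb{Z}}$ is \emph{equivalent} (and not merely implied by) the entire collection $\mathscr{R}_{q,k}\cup\mathscr{R}'_{p,k}$ in the presence of $\omega^q=e$, so that the Tietze moves neither silently drop nor silently introduce a relation.
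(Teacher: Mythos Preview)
Your argument is correct. The paper does not prove this proposition in-text; it simply records it with a citation to \cite{O}, so there is no in-paper proof to compare against. Your Tietze reduction is sound at every step: from the iterated conjugacy relations together with $\omega^q=e$ one indeed obtains $a_k=\omega^{ks}a_0\omega^{-ks}$, and substituting this back makes each $\mathscr{R}_{q,k}$ and each $\mathscr{R}'_{p,k}$ a tautology (the former via $\omega^{qs}=e$, the latter via $\omega^{ps-1}=e$), so the bookkeeping worry you flag at the end is already resolved by what you wrote. The telescoping of $\omega=a_{p-1}\cdots a_0$ and the substitution $u=a_0\omega^{-s}$ then give $u^p=\omega^{1-ps}=e$, yielding the presentation $\langle u,\omega\mid u^p=e,\ \omega^q=e\rangle$ of $\mathbb{Z}_p*\mathbb{Z}_q$ as claimed.
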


Finally, we conclude this section with the following proposition~which gives necessary and sufficient conditions for the group $G(p;q;r)$ to be abelian. This proposition can be used to test the commutativity of the group $\pi_1(\mathbb{P}^2\setminus \widetilde C)$ which appears in Theorem \ref{mt}.

\begin{proposition}[cf.~\cite{O}]\label{cor216}
The group $G(p;q;r)$ is abelian if and only if one of the following conditions is satisfied:
\begin{enumerate}
\item
$\gcd(p,q)=\gcd(q,r)=1$;
\item
$p=1$;
\item
$\gcd(p,q)=2$, $\gcd(q/2,r)=1$ and $p=2$.
\end{enumerate}
More precisely,
\begin{displaymath}
G(p;q;r)\simeq \left\{
\begin{aligned}
& \mathbb{Z}_{pr} &&\mbox{if} && \gcd(p,q)=\gcd(q,r)=1; \\
& \mathbb{Z}_{r} &&\mbox{if} && p=1; \\
& \mathbb{Z}\times\mathbb{Z}_{r} &&\mbox{if} && \gcd(p,q)=2,\ \gcd(q/2,r)=1 \mbox{ and } p=2. 
\end{aligned}
\right.
\end{displaymath}
\end{proposition}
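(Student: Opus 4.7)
The plan is to treat sufficiency (each of conditions (1), (2), (3) implies the claimed abelian structure) by direct collapse of the presentation (\ref{presentationGpqr}), and to treat necessity via the abelianisation together with non-abelian quotients.

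For sufficiency, I would handle each case in turn. In case (1), iterating the conjugacy relation $\mathscr{R}'_{p,k}$ exactly $r$ times and using $\omega^r=e$ yields $a_{k+pr}=a_k$; combined with $\mathscr{R}_{q,k}$ the index period is $\gcd(pr,q)$, which the two gcd hypotheses force to equal $1$. Hence all $a_k$ coalesce into $a_0$ with $a_0^{pr}=e$, giving $\mathbb{Z}_{pr}$. Case (2) is an immediate induction: $\omega=a_0$ and $a_{k+1}=\omega a_k\omega^{-1}$ yield $a_k=a_0$ for every $k$, leaving only $a_0^r=e$. For case (3), the same iteration together with $\gcd(q/2,r)=1$ reduces the index period to $2$, so only $a_0$ and $a_1$ survive; the equality $a_0=a_2=\omega a_0\omega^{-1}$ forces $[\omega,a_0]=e$, and since $\omega=a_1a_0$ this forces $[a_0,a_1]=e$. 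The resulting abelian group on two generators subject to $r(a_0+a_1)=0$ is $\mathbb{Z}\times\mathbb{Z}_r$.

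For necessity, I would first compute the abelianisation. Writing $d=\gcd(p,q)$, the relations $\mathscr{R}_{q,k}$ and the abelianised $\mathscr{R}'_{p,k}$ make the classes $\bar a_k$ periodic of period $d$, so the abelianisation is generated by $\bar a_0,\ldots,\bar a_{d-1}$ with single relation $(pr/d)\sum_{i=0}^{d-1}\bar a_i=0$, yielding
\begin{equation*}
G(p;q;r)^{\mathrm{ab}}\simeq \mathbb{Z}^{d-1}\times\mathbb{Z}_{pr/d}.
\end{equation*}
This matches the three claimed isomorphism types in cases (1)--(3). If $G(p;q;r)$ is abelian it equals its abelianisation, so one must exclude every remaining configuration of $(p,q,r)$. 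Since abelianness descends to quotients, it suffices to exhibit a non-abelian quotient in each bad case; a natural family is given by $G(p;q;r')$ with $r'\mid\gcd(q,r)$, which is a quotient via the extra relation $\omega^{r'}=e$ and which the preceding free-product proposition already identifies with $\mathbb{Z}_p*\mathbb{Z}_{r'}$ in several subcases.

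The main obstacle is this necessity step: ruling out all configurations outside (1)--(3) (notably $d\geq 3$, $d=2$ with $p\geq 4$, $d=2$ with $p=2$ and $\gcd(q/2,r)>1$, and $d=1$ with $\gcd(q,r)>1$) requires a careful case analysis, because the relation $\omega^r=e$ may collapse commutators in non-obvious ways and one must verify that enough non-trivial ones survive. The cleanest route I foresee is via the geometric realisation of $G(p;q;r)$ as $\pi_1(\mathbb{P}^2\setminus\widetilde C)$ for a projective join-type curve (cf.~\cite{O}), so that non-commutativity can be detected by an explicit monodromy representation of the Zariski--van Kampen pencil, sidestepping ad hoc manipulation of the presentation.
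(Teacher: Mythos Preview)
The paper does not prove this proposition at all: it is quoted verbatim from \cite{O} (note the ``cf.~\cite{O}'' tag), so there is no in-paper argument to compare against. Your proposal therefore goes well beyond what the present paper does.

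On the merits: your sufficiency arguments for cases (1)--(3) are correct. The collapse of the index period via $\gcd(pr,q)$ in case~(1), the trivial induction in case~(2), and the centrality-of-$\omega$ argument in case~(3) all go through cleanly, and the resulting groups are as claimed (the exact orders being confirmed by the abelianisation you compute afterwards).

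The necessity direction, however, has a genuine gap that you partly acknowledge but understate. Your plan is to pass to a quotient $G(p;q;r')$ with $r'\mid\gcd(q,r)$ and invoke the free-product proposition $G(p;q;q)\simeq\mathbb{Z}_p*\mathbb{Z}_q$. But that proposition has two hypotheses: it needs $r'=q$ \emph{and} $\gcd(p,q)=1$. The second hypothesis fails precisely in the most important bad cases, namely $d=\gcd(p,q)\geq 3$ and $d=2$ with $p\geq 4$, so the free-product proposition gives you nothing there. Even in the case $d=1$, $\gcd(q,r)>1$, taking $r'=\gcd(q,r)$ does not land you at $G(p;q;q)$ unless $q\mid r$; you need an extra reduction (e.g.\ showing $G(p;q;r')\simeq G(p;r';r')$ via the derived periodicity, which does work since $\gcd(p,q)=1$ forces $\gcd(q,pr')=r'$). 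For the cases with $d\geq 2$ you will need a different non-abelian target --- for instance a surjection onto a suitable dihedral or triangle group, or onto $G(d;d;r'')$ for some $r''$ --- rather than $\mathbb{Z}_p*\mathbb{Z}_q$. Your suggested fallback to the geometric realisation and monodromy is reasonable in spirit, but as stated it is not a proof: you would need to specify the representation and check it is non-abelian in each remaining configuration. In short, the architecture is right, but the necessity half still requires substantive work that the free-product proposition alone cannot supply.
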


\section{Special pencil lines}\label{sectionspl}

To compute the fundamental group $\pi_1(\mathbb{C}^2\setminus C)$ in Theorem \ref{mt}, we use the Zariski--van Kampen theorem with the pencil $\mathscr{P}$ given by the vertical lines $L_{\gamma}\colon x=\gamma$, where $\gamma\in\mathbb{C}$ (cf.~\cite{vK,O2,Z}).\footnote{Note that this pencil is `admissible' in the sense of \cite{O2}.} 
This theorem says that
\begin{displaymath}
\pi_1(\mathbb{C}^2\setminus C)\simeq\pi_1(L_{\gamma_0}\setminus C)\big/\mathscr{M},
\end{displaymath} 
where $L_{\gamma_0}$ is a generic line of $\mathscr{P}$ and $\mathscr{M}$ is the normal subgroup of $\pi_1(L_{\gamma_0}\setminus C)$ generated by the monodromy relations associated with the `special' lines of $\mathscr{P}$. Here, a line $L_\gamma$ of $\mathscr{P}$ is called \emph{special} if it meets the curve $C$ at a point $(\gamma,\delta)$ with intersection multiplicity at least $2$. This happens if and only if $f(\delta)=g(\gamma)$ and $f'(\delta)=0$. 

Let $\gamma_{j,1},\ldots,\gamma_{j,d'}$ be the roots of $g(x)=f(\delta_j)$ for $1\leq j\leq \ell-1$, where $\delta_j$ is defined as in Section \ref{intro}.
If $g'(\gamma_{j,k})\not=0$, then $(\gamma_{j,k},\delta_j)$ is a simple point of $C$. In a small neighbourhood of this point, $C$ is topologically described by 
\begin{align}\label{nearsimple}
(y-\delta_{j})^2 = c(x-\gamma_{j,k}),
\end{align}
where $c\not=0$, and the line $x=\gamma_{j,k}$ is tangent to the curve at $(\gamma_{j,k},\delta_{j})$ with intersection multiplicity $2$. (We recall that $\delta_j$ is a \emph{simple} root of $f'(y)=0$.) This is the case if $\gamma_{j,k}\in\mathbb{C}\setminus \mathbb{R}$, as $g'(x)=0$ has only real roots. 
If $g'(\gamma_{j,k})=0$, then $(\gamma_{j,k},\delta_j)$ is an outer singularity of type $\textbf{B}_{2,2}$. (Note that $\gamma_{j,k}$ is a simple root of $g'(x)=0$.) Near this point, the curve is topologically equivalent to
\begin{align}\label{nearA1}
(y-\delta_j)^2 = c(x-\gamma_{j,k})^2.
\end{align}
For each $\beta_j$ with $\nu_j\geq 2$, the roots of $g(x)=f(\beta_j)$ are $\alpha_1,\ldots,\alpha_m$. If $\lambda_i=1$, then $(\alpha_i,\beta_j)$ is a simple point of $C$. In a small neighbourhood of it, $C$ is topologically given by
\begin{equation}\label{nearsimpleb}
(y-\beta_j)^{\nu_j}=c(x-\alpha_i),
\end{equation}
and the line $x=\alpha_i$ is tangent to $C$ at $(\alpha_i,\beta_j)$ with intersection multiplicity $\nu_j$.
If $\lambda_i\geq 2$, then the point $(\alpha_i,\beta_j)$ is an inner singularity of type $\textbf{B}_{\nu_j,\lambda_i}$, and in a small neighbourhood of it, the curve is topologically equivalent to
\begin{equation}\label{ngs}
(y-\beta_j)^{\nu_j}=c(x-\alpha_i)^{\lambda_i}.
\end{equation}

\section{Bifurcation graph}\label{spl}

The special lines of the pencil $\mathscr{P}$ correspond to certain vertices of a graph called the `bifurcation graph'. This graph is defined as follows.
Let $\mathscr{V}_{\mbox{\tiny crit}}(f)$ (respectively, $\mathscr{V}_{\mbox{\tiny crit}}(g)$) be the set of critical values of $f$ (respectively, $g$), and let $\mathscr{V}_{\mbox{\tiny crit}}:=\mathscr{V}_{\mbox{\tiny crit}}(f)\cup \mathscr{V}_{\mbox{\tiny crit}}(g)$. We have $\mathscr{V}_{\mbox{\tiny crit}} =\{0,g(\gamma_1),\ldots,g(\gamma_{m-1}),f(\delta_1),\ldots,f(\delta_{\ell-1})\}$.
Denote by $\Sigma$ the bamboo-shaped graph (embedded in the real axis) whose vertices are the points of $\mathscr{V}_{\mbox{\tiny crit}}$ (cf.~Figure \ref{BambooGeneral}). This graph can be decomposed into two connected subgraphs $\Sigma_+$ and $\Sigma_-$, where $\Sigma_+$ (respectively, $\Sigma_-$) is the subgraph whose vertices are $\geq 0$ (respectively,~$\leq 0$). Hereafter, we shall denote by $v_+:=\sup\, \{v\mid v\in\mathscr{V}_{\mbox{\tiny crit}}\}$ and $v_-:=\inf\, \{v\mid v\in\mathscr{V}_{\mbox{\tiny crit}}\}$.
The pull-back graph $\Gamma:=g^{-1}(\Sigma)$ of $\Sigma$ by $g$ is called the \emph{bifurcation graph (or `dessin d'enfants') associated with the curve $C$ with respect to $g$}. Its vertices are the points of the set $g^{-1}(\mathscr{V}_{\mbox{\tiny crit}})$.

\begin{observation}
The special lines $x=\gamma$ of the pencil $\mathscr{P}$ are given by the vertices $\gamma$ of $\Gamma$ such that $g(\gamma)\in\mathscr{V}_{\mbox{\tiny \emph{crit}}}(f)$. 
\end{observation}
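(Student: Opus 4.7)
The plan is to prove this observation by directly unwinding the definition of a special line given in Section~\ref{sectionspl} and the definition of the vertex set of the bifurcation graph $\Gamma$ given just before the observation.

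First, I would recall that, by the characterization at the start of Section~\ref{sectionspl}, a line $L_\gamma\colon x=\gamma$ of $\mathscr{P}$ is special if and only if there exists $\delta\in\mathbb{C}$ such that
\begin{equation*}
f(\delta)=g(\gamma) \quad\text{and}\quad f'(\delta)=0.
\end{equation*}
The second condition says exactly that $\delta$ is a critical point of $f$, so $f(\delta)$ lies in $\mathscr{V}_{\mbox{\tiny crit}}(f)$. Combined with the first condition, this yields $g(\gamma)\in\mathscr{V}_{\mbox{\tiny crit}}(f)$.

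Conversely, if $\gamma$ is a point of $\mathbb{C}$ with $g(\gamma)\in\mathscr{V}_{\mbox{\tiny crit}}(f)$, then by definition of $\mathscr{V}_{\mbox{\tiny crit}}(f)$ there exists a critical point $\delta$ of $f$ with $f(\delta)=g(\gamma)$, and this $\delta$ witnesses that $L_\gamma$ is special. Thus the two conditions are equivalent.

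It remains to observe that any $\gamma$ with $g(\gamma)\in\mathscr{V}_{\mbox{\tiny crit}}(f)$ automatically belongs to the vertex set $g^{-1}(\mathscr{V}_{\mbox{\tiny crit}})$ of $\Gamma$, since $\mathscr{V}_{\mbox{\tiny crit}}(f)\subseteq\mathscr{V}_{\mbox{\tiny crit}}$. Hence the special lines are precisely those $L_\gamma$ for which $\gamma$ is a vertex of $\Gamma$ satisfying $g(\gamma)\in\mathscr{V}_{\mbox{\tiny crit}}(f)$. There is no real obstacle here: the proof is a direct verification from the definitions, and the only point worth highlighting is that vertices of $\Gamma$ lying over $\mathscr{V}_{\mbox{\tiny crit}}(g)\setminus\mathscr{V}_{\mbox{\tiny crit}}(f)$ (namely the values $\alpha_i$ with $\lambda_i\geq 2$ and the $\gamma_i$'s mapping outside $\mathscr{V}_{\mbox{\tiny crit}}(f)$) correspond to non-special lines, so the restriction $g(\gamma)\in\mathscr{V}_{\mbox{\tiny crit}}(f)$ cannot be dropped.
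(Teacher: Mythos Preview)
Your proof is correct and is exactly the direct unwinding of definitions one expects; the paper itself states the observation without proof, so your argument is the natural verification.

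One small correction to your closing parenthetical remark: the example ``$\alpha_i$ with $\lambda_i\geq 2$'' is generally \emph{not} a vertex lying over $\mathscr{V}_{\mbox{\tiny crit}}(g)\setminus\mathscr{V}_{\mbox{\tiny crit}}(f)$. Indeed $g(\alpha_i)=0$, and $0\in\mathscr{V}_{\mbox{\tiny crit}}(f)$ whenever some $\nu_j\geq 2$; in that case $L_{\alpha_i}$ \emph{is} a special line (and the paper treats it as such in Section~\ref{proofmt}). This does not affect the validity of your argument, since the parenthetical is only illustrative, but you should adjust the example to avoid confusion with the later monodromy computations.
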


\begin{figure}[H]
\includegraphics[width=10.9cm,height=1.4cm]{./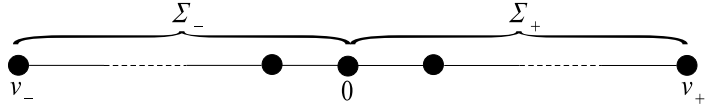}
\caption{Graph $\Sigma$}
\label{BambooGeneral}
\end{figure}

The bifurcation graph uniquely decomposes as the union of connected subgraphs $\Gamma(\alpha_1),\ldots,\Gamma(\alpha_m)$ such that, for $1\leq i\leq m$, the~following properties are satisfied:
\begin{enumerate}
\item
$\Gamma(\alpha_i)$ is a star-shaped graph with `centre' $\alpha_i$, and with $2\lambda_i$ branches (respectively, $\lambda_i$ branches) if $v_+>0$ and $v_-<0$ (respectively, if $v_+$ or $v_-$ is zero);
\item 
the restriction of $g$ to $\Gamma(\alpha_i)$ is an $\lambda_i$-fold branched covering onto $\Sigma$, whose branched locus is $\{0\}$, and $g^{-1}(0)\cap\Gamma(\alpha_i)=\{\alpha_i\}$;
\item
for $i\not=m$, $\Gamma(\alpha_i)\cap \Gamma(\alpha_{i+1})=\{\gamma_i\}$, and if $g(\gamma_i)\notin\{v_{-},v_{+}\}$, then the branch of $\Gamma(\alpha_i)$ (respectively, $\Gamma(\alpha_{i+1})$) with $\gamma_i$ as a vertex goes vertically downward (respectively, vertically upward) at $\gamma_i$.
\end{enumerate}

\begin{definition}
The subgraphs $\Gamma(\alpha_i)$ ($1\leq i\leq m$) are called the \emph{satellite graphs} of $\Gamma$. We say that a satellite $\Gamma(\alpha_i)$ is \emph{regular} if $g(\gamma_{i-1})$ and $g(\gamma_{i})$ are regular values for $f$. (For $i=1$, this condition reduces to $g(\gamma_{1}) \notin \mathscr{V}_{\mbox{\tiny crit}}(f)$, and for $i=m$, it reduces to $g(\gamma_{m-1}) \notin \mathscr{V}_{\mbox{\tiny crit}}(f)$.) 
\end{definition}

Clearly, the curve $C$ is generic if and only if all the satellites subgraphs of $\Gamma$ are regular. The curve is semi-generic with respect to $g$ if and only if $\Gamma$ has at least one regular satellite.

\begin{figure}[H]
\includegraphics[width=11.6cm,height=3.4cm]{./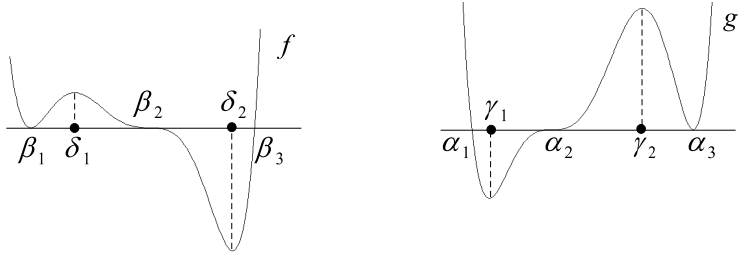}
\caption{Real graphs of $f$ and $g$ (Example \ref{examplebifurcationgraph})}
\label{Example1-graphs}
\end{figure}

\begin{figure}[H]
\includegraphics[width=10.9cm,height=1.4cm]{./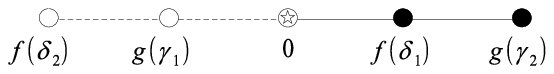}
\caption{Graph $\Sigma$ (Example \ref{examplebifurcationgraph})}
\label{Bamboo}
\end{figure}

\begin{figure}[H]
\includegraphics[width=10.9cm,height=3.4cm]{./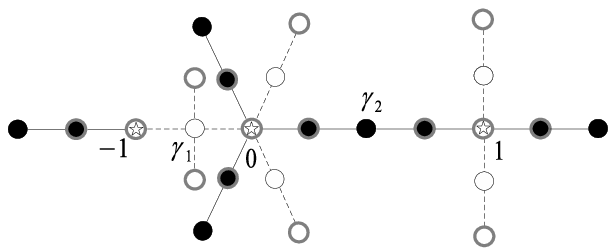}
\caption{Bifurcation graph $\Gamma$ (Example \ref{examplebifurcationgraph})}
\label{exampleBG}
\end{figure}

\begin{figure}[H]
\includegraphics[width=10.9cm,height=3.4cm]{./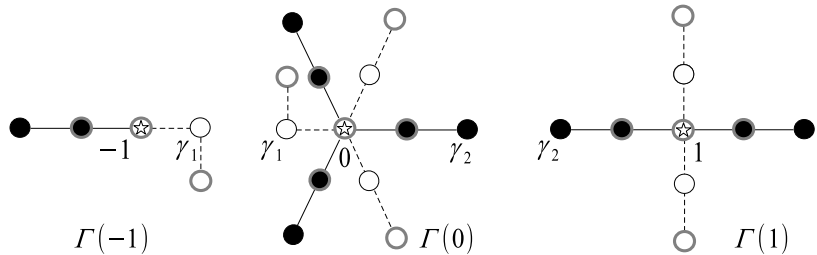}
\caption{Satellites $\Gamma(-1)$, $\Gamma(0)$ and $\Gamma(1)$ (Example \ref{examplebifurcationgraph})}
\label{examplesatellites}
\end{figure}

\begin{example}\label{examplebifurcationgraph}
Consider the $\mathbb{R}$-join-type curve $C$ defined by the polynomials $f(y)=(y+1)^2 y^3 (y-2)$ and $g(x)=2(x+1)x^3(x-1)^2$. Then, $f$ has four critical points $\beta_1=-1$,  $\delta_1=(1-\sqrt{5})/2$, $\beta_2=0$ and $\delta_2=(1+\sqrt{5})/2$. The polynomial $g$ also has four critical points $\gamma_1=-(1+\sqrt{73})/12$, $\alpha_2=0$, $\gamma_2=-(1-\sqrt{73})/12$ and $\alpha_3=1$. 
See Figure \ref{Example1-graphs}. (In the figure, the numerical scale is not respected; however, the order $f(\delta_2)<g(\gamma_1)<f(\delta_1)<g(\gamma_2)$ is rigorously respected.)
As $g(\gamma_i)\not=f(\delta_j)$ for any $1\leq i,j\leq 2$, the curve $C$ is generic. The corresponding graphs $\Sigma$ and $\Gamma$ are given in Figures \ref{Bamboo} and \ref{exampleBG} respectively. The satellites $\Gamma(-1)$, $\Gamma(0)$ and $\Gamma(1)$ associated with $\Gamma$ are given in Figure \ref{examplesatellites}.  The black vertices and the full lines of the bifurcation graph $\Gamma$ correspond to the part above the positive branch $\Sigma_+$ of $\Sigma$. The white vertices and the dotted lines correspond to the part above the negative branch $\Sigma_-$. The star-style vertices represent the points $\alpha_1=-1$, $\alpha_2=0$ and $\alpha_3=1$ which are the centres of the satellites. All the vertices (black, white and star-style) surrounded with a gray circle correspond to the special lines of the pencil $\mathscr{P}$. As the curve is generic, we have $\pi_1(\mathbb{C}^2\setminus C)\simeq G(1;1)\simeq \mathbb{Z}$ and $\pi_1(\mathbb{P}^2\setminus \widetilde C)\simeq G(1;1;6)\simeq \mathbb{Z}_6$ (by \cite{O} or Theorem \ref{mt} above).
\end{example}

Now, let us give an example with a semi-generic curve which is not generic.

\begin{example}\label{examplebifurcationgraph-ex2}
Consider the $\mathbb{R}$-join-type curve $C$ defined by the polynomials $f(y)=c (y+1) y^3 (y-1)$ and $g(x)=(x+1)^2 x^3(x-2)$, where the coefficient $c$ is positive. We check easily that $f$ has three critical points  $\delta_1<\beta_2=0<\delta_2$, while the polynomial $g$ has four critical points $\alpha_1=-1<\gamma_1<\alpha_2=0<\gamma_2$. We choose the coefficient $c$ so that $f(\delta_2)=g(\gamma_2)$ (in particular, $C$ is not generic). 
See Figure~\ref{Example2-graphs}. (Again, the figure is not numerically correct but the order $f(\delta_2)=g(\gamma_2)<g(\gamma_1)<f(\delta_1)$ is respected.) Now, as $g(\gamma_1)$ is a regular value for $f$, the satellite $\Gamma(\alpha_1)$ is regular, and therefore the curve $C$ is semi-generic with respect to $g$. The corresponding graphs $\Sigma$ and $\Gamma$ are given in Figures \ref{Bamboo-ex2} and \ref{exampleBG-ex2} respectively. The satellites $\Gamma(-1)$, $\Gamma(0)$ and $\Gamma(2)$ associated with $\Gamma$ are given in Figure \ref{examplesatellites-ex2}. (The significance of the colours is as above.) As the curve is semi-generic with respect to $g$, Theorem \ref{mt} says that $\pi_1(\mathbb{C}^2\setminus C)\simeq G(1;1)\simeq \mathbb{Z}$ 
and $\pi_1(\mathbb{P}^2\setminus \widetilde C)\simeq G(1;1;6)\simeq \mathbb{Z}_6$.
\end{example}

\begin{figure}[H]
\includegraphics[width=11.6cm,height=3.4cm]{./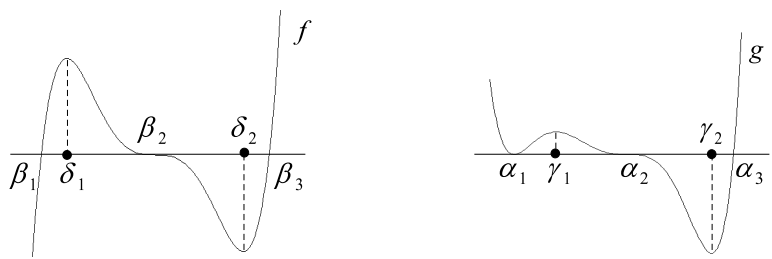}
\caption{Real graphs of $f$ and $g$ (Example \ref{examplebifurcationgraph-ex2})}
\label{Example2-graphs}
\end{figure}

\begin{figure}[H]
\includegraphics[width=10.9cm,height=1.4cm]{./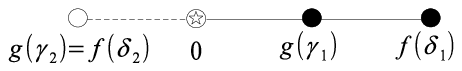}
\caption{Graph $\Sigma$ (Example \ref{examplebifurcationgraph-ex2})}
\label{Bamboo-ex2}
\end{figure}

\begin{figure}[H]
\includegraphics[width=10.9cm,height=3.4cm]{./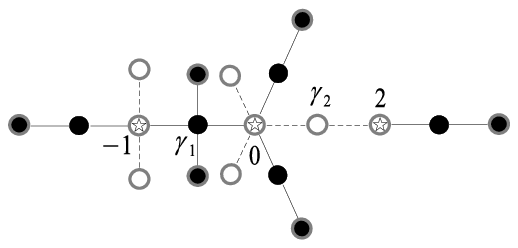}
\caption{Bifurcation graph $\Gamma$ (Example \ref{examplebifurcationgraph-ex2})}
\label{exampleBG-ex2}
\end{figure}

\begin{figure}[H]
\includegraphics[width=10.9cm,height=3.4cm]{./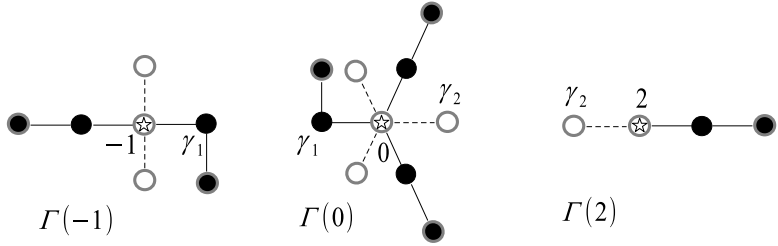}
\caption{Satellites $\Gamma(-1)$, $\Gamma(0)$ and $\Gamma(2)$ (Example \ref{examplebifurcationgraph-ex2})}
\label{examplesatellites-ex2}
\end{figure}

\section{Proof of Theorem \ref{mt}}\label{proofmt}

We suppose that $C$ has at least one exceptional singularity. (When $C$ is generic, the result is already proved in \cite{O}.)
For simplicity, we shall assume $v_-<0$ and $v_+>0$, so that each satellite $\Gamma(\alpha_i)$ ($1\leq i\leq m$) has $2\lambda_i$ branches. (The proof can be easily adapted if $v_-=0$ or $v_+=0$.) As the curve is semi-generic with respect to $g$, there exists $i_0$ 
such that $\Gamma(\alpha_{i_0})$ is a regular satellite (i.e.,  $g(\gamma_{i_0-1})\notin\mathscr{V}_{\mbox{\tiny crit}}(f)$ and $g(\gamma_{i_0})\notin\mathscr{V}_{\mbox{\tiny crit}}(f)$). 

As mentioned above, we use the Zariski--van Kampen theorem with the pencil $\mathscr{P}$ given by the vertical lines $L_{\gamma}\colon x=\gamma$, where $\gamma\in\mathbb{C}$.
We take a sufficiently small positive number $\varepsilon$, and for any real number $\eta$, we write $\eta^-:=\eta-\varepsilon$ and $\eta^+:=\eta+\varepsilon$.
We consider the generic line $L_{\alpha_{i_0}^+}$, and we choose generators $$\xi_{1,0},\ldots,\xi_{1,\nu_1-1},\ldots,\xi_{\ell,0},\ldots,\xi_{\ell,\nu_\ell-1}$$
of the fundamental group $\pi_1(L_{\alpha_{i_0}^+} \setminus C)$ as in Figure~\ref{initalgenerators}. (In the figure, we do not respect the numerical scale; we even zoom on the part that collapses to $\beta_j$ when $\varepsilon \rightarrow 0$.) Here, the loops $\xi_{j,r_j}$ ($1\leq j\leq\ell$, $0\leq r_j\leq\nu_j-1$) are counterclockwise-oriented \emph{lassos} around the intersection points of  $L_{\alpha_{i_0}^+}$ with $C$. We shall refer to these generators as \emph{geometric} generators.

\begin{figure}[H]
\includegraphics[width=11cm,height=5.3cm]{./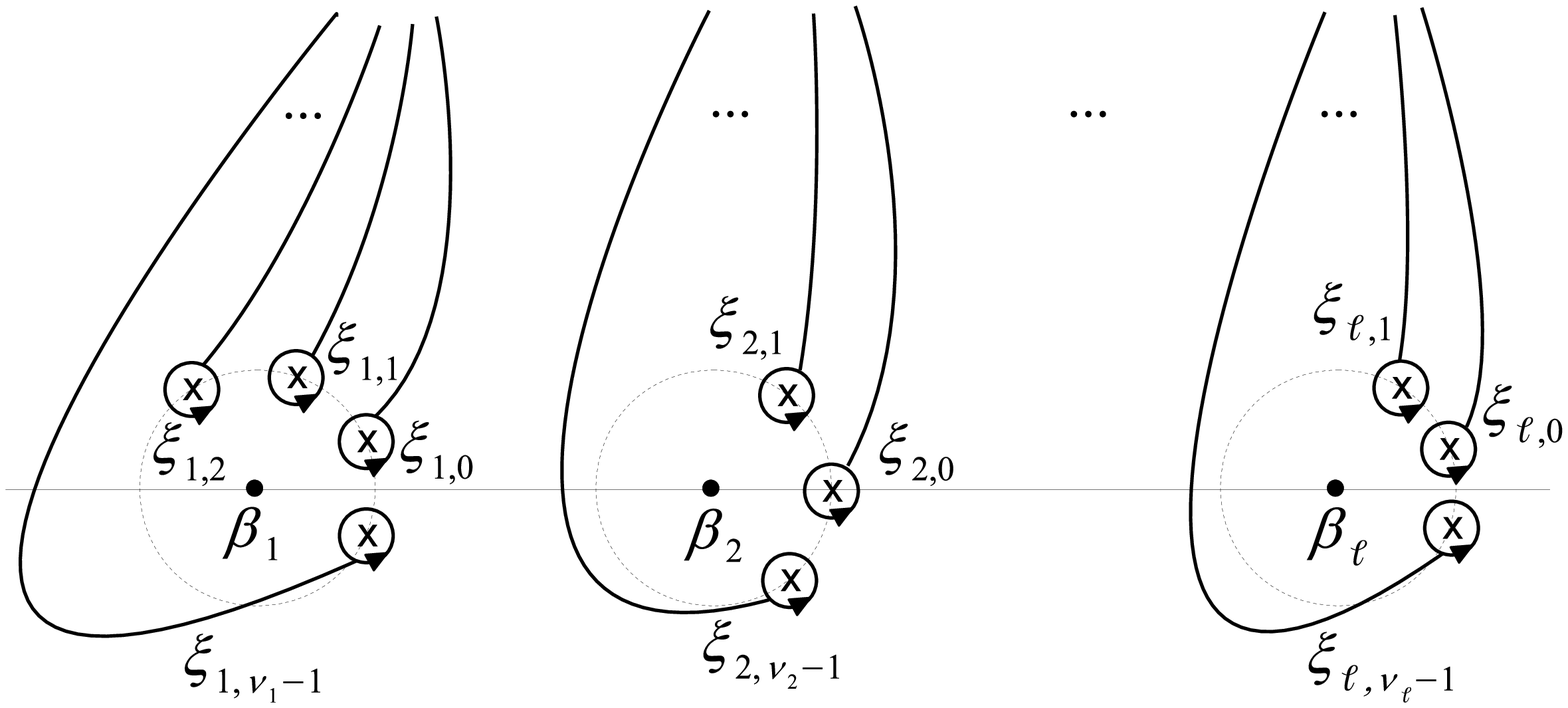}
\caption{Generators of $\pi_1(L_{\alpha_{i_0}^+}\setminus C)$}
\label{initalgenerators}
\end{figure}

For $1\leq j\leq\ell$, $0\leq r_j\leq\nu_j-1$ and $n\in\mathbb{Z}$, let
\begin{align*}
\omega_j:=\xi_{j,\nu_j-1}\ldots\xi_{j,0}
\quad\mbox{and}\quad
\xi_{j,n\nu_j+r_j}:=\omega_j^n \cdot \xi_{j,r_j} \cdot \omega_j^{-n}.
\end{align*}
These relations define elements $\xi_{j,k}$ for any $1\leq j\leq\ell$ and any $k\in\mathbb{Z}$. (Indeed, any $k\in\mathbb{Z}$ can be written as $k=n\nu_j+r_j$, with $n\in\mathbb{Z}$ and $0\leq r_j\leq\nu_j-1$.) It is easy to see that
\begin{equation}\label{rel.1}
\xi_{j,n\nu_j+r}=\omega_j^n \cdot\xi_{j,r} \cdot\omega_j^{-n}
\quad \mbox{for} \quad 1\leq j\leq\ell\mbox{ and }n,r\in\mathbb{Z}.
\end{equation}

\begin{figure}[H]
\includegraphics[width=11cm,height=3.8cm]{./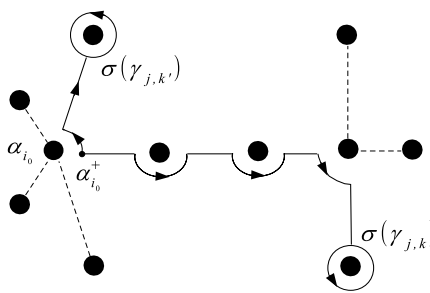}
\caption{Example of standard generators of $\pi_1(\mathbb{C}\setminus \mathscr{S})$}
\label{monodromypaths}
\end{figure}

As usual, to find the monodromy relations associated with the special lines of the pencil $\mathscr{P}$, we~consider a `standard' system of counter\-clockwise-oriented generators of the fundamental group $\pi_1(\mathbb{C}\setminus \mathscr{S})$, where $\mathscr{S}$ is the set consisting of the vertices $\alpha_i$ ($1\leq i\leq m$) and $\gamma_{j,k}$ ($1\leq j\leq \ell-1$, $1\leq k\leq d'$) in the bifurcation graph $\Gamma$.
(We recall that the elements $\gamma_{j,k}$ are the roots of the~equation $g(x)=f(\delta_j)$, where $\delta_j$ is defined as in Section \ref{intro}.)
We choose $\alpha_{i_0}^+$ as base point, and we denote these generators by $\sigma(\alpha_i)$ and $\sigma(\gamma_{j,k})$.
Then, $\sigma(\alpha_i)$ (respectively, $\sigma(\gamma_{j,k})$) is a loop in $\mathbb{C}\setminus \mathscr{S}$ surrounding the vertex $\alpha_i$ (respectively, $\gamma_{j,k}$). It is based at $\alpha_{i_0}^+$ and it runs along the edges of $\Gamma$ avoiding the vertices corresponding to special lines (cf.~Figure \ref{monodromypaths}). The monodromy relations around the special line $L_{\alpha_i}$ (respectively, $L_{\gamma_{j,k}}$) are obtained by moving the generic fibre $L_{\alpha_{i_0}^+} \setminus C$ isotopically `above' the loop $\sigma(\alpha_i)$ (respectively, $\sigma(\gamma_{j,k})$) and by identifying each generator $\xi_{j,r_j}$ ($1\leq j\leq \ell$, $0\leq r_j\leq \nu_j-1$) of the group $\pi_1(L_{\alpha_{i_0}^+} \setminus C)$ with its image by the terminal homeomorphism of this isotopy (cf.~\cite{vK,O2,Z}).

We start with the monodromy relations associated with the special line $L_{\alpha_{i_0}}$. These relations can be found using the local models $y^{\nu_j}=x^{\lambda_{i_0}}$ ($1\leq j\leq\ell$). Precisely, if we write $\lambda_{i_0}=n_j\nu_j+r_j$, $n_j\in\mathbb{Z}$, $0\leq r_j\leq \nu_j-1$, they are given by
\begin{equation*}
\left\{
\begin{aligned}
& \xi_{j,0}=\omega_j^{n_j}\cdot\xi_{j,r_j}\cdot\omega_j^{-n_j},\\
& \xi_{j,1}=\omega_j^{n_j}\cdot\xi_{j,r_j+1}\cdot\omega_j^{-n_j},\\
& \ldots\\
& \xi_{j,\nu_j-(r_j+1)}=\omega_j^{n_j}\cdot\xi_{j,\nu_j-1}\cdot\omega_j^{-n_j},\\
& \xi_{j,\nu_j-r_j}=\omega_j^{n_j+1}\cdot\xi_{j,0}\cdot\omega_j^{-(n_j+1)},\\
& \ldots\\
& \xi_{j,\nu_j-1}=\omega_j^{n_j+1}\cdot\xi_{j,r_j-1}\cdot\omega_j^{-(n_j+1)}.\\
\end{aligned}
\right.
\end{equation*}
By (\ref{rel.1}), these relations can be written more concisely as
\begin{align*}
\xi_{j,k_j}=\omega_j^{n_j}\cdot\xi_{j,k_j+r_j}\cdot\omega_j^{-n_j}=\xi_{j,k_j+\lambda_{i_0}}
\quad\mbox{for}\quad 1\leq j\leq \ell \mbox{ and } 0\leq k_j\leq\nu_j-1.
\end{align*}
In fact, (\ref{rel.1}) shows that 
\begin{align}\label{rel.alpha.i0}
\xi_{j,k}=\xi_{j,k+\lambda_{i_0}} 
\quad \mbox{for} \quad 
1\leq j\leq\ell \mbox{ and } k\in\mathbb{Z}.
\end{align}

\begin{remark}
If $\nu_j=1$ for all $1\leq j\leq\ell$, then $L_{\alpha_{i_0}}$ is not a special line, and hence, the corresponding monodromy relations are trivial. However, it is clear that the relations (\ref{rel.alpha.i0}) remain valid. (Indeed, in this case, $\xi_{j,k}=\xi_{j,0}$ for all $k\in\mathbb{Z}$.)
\end{remark}

Next, we look for the monodromy relations along the branches of $\Gamma(\alpha_{i_0})$. For $0\leq q\leq 2\lambda_{i_0}-1$, we denote by $B_{i_0,q}$ the $q$-th branch of~$\Gamma(\alpha_{i_0})$. We suppose that the branches $B_{i_0,2q}$ (respectively, $B_{i_0,2q+1}$), $0\leq q\leq \lambda_{i_0}-1$, correspond to the positive part $\Sigma_+$ (respectively, the negative part $\Sigma_-$) of $\Sigma$ through the correspondence $\Gamma(\alpha_{i_0})\to\Sigma$ given by the restriction of~$g$. We also suppose that the branch $B_{i_0,0}$ (respectively, $B_{i_0,1}$) contains the line segment $[\alpha_{i_0},\gamma_{i_0}]$  
if $g(\gamma_{i_0})>0$ (respectively, if $g(\gamma_{i_0})<0$). For instance, in the special case of the satellite $\Gamma(\alpha_2)=\Gamma(0)$ of Example \ref{examplebifurcationgraph}, the branches $B_{2,q}$ are as in Figure \ref{labelbranches}.
For simplicity, hereafter we shall suppose $g(\gamma_{i_0})>0$. (The argument is similar in the case $g(\gamma_{i_0})<0$.) 

\begin{figure}[H]
\includegraphics[width=11cm,height=4.2cm]{./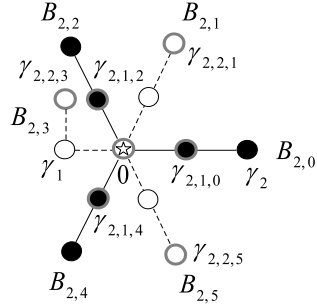}
\caption{Branches of the satellite $\Gamma(\alpha_2)$ of Example \ref{examplebifurcationgraph}}
\label{labelbranches}
\end{figure}

\begin{figure}[H]
\includegraphics[width=13.2cm,height=5cm]{./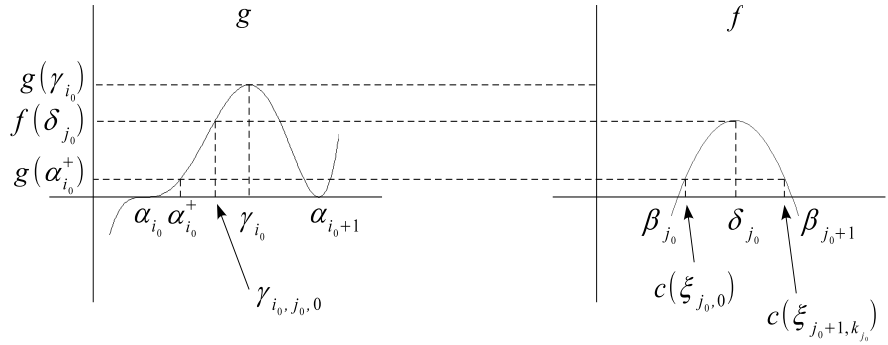}
\caption{Real graphs of $f$ and $g$ when $0<f(\delta_{j_0})<g(\gamma_{i_0})$}
\label{graphs}
\end{figure}

\begin{figure}[H]
\includegraphics[width=11cm,height=5.3cm]{./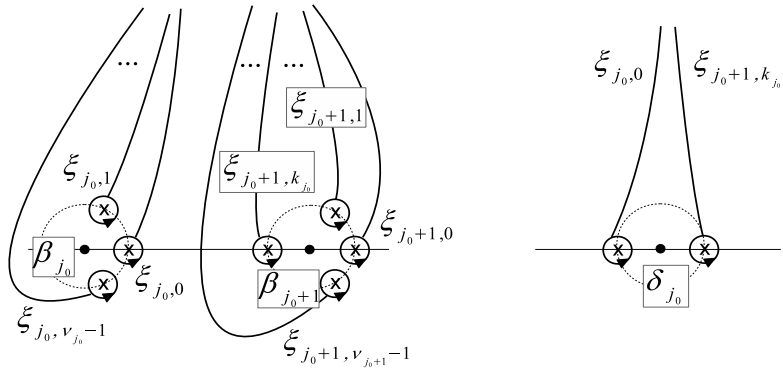}
\caption{Generators at $x=\alpha_{i_0}^+$ (left-side) and at $x=\gamma_{i_0,j_0,0}^-$ (right-side) when $g(\gamma_{i_0})>0$ and $f(\delta_{j_0})>0$}
\label{tangentrelation-case2}
\end{figure}

Pick an element $j_0$ such that $1\leq j_0\leq \ell-1$. If $f(\delta_{j_0})>0$, then, for each $0\leq q\leq \lambda_{i_0}-1$, there exists an unique vertex $\gamma_{i_0,j_0,2q}\in B_{i_0,2q}$ such that $g(\gamma_{i_0,j_0,2q})=f(\delta_{j_0})$. For instance, in the special case of the satellite $\Gamma(\alpha_2)=\Gamma(0)$ of Example \ref{examplebifurcationgraph}, $f(\delta_1)>0$ and for $0\leq q\leq 2$ there exists an unique vertex $\gamma_{2,1,2q}\in B_{2,2q}$ such that $g(\gamma_{2,1,2q})=f(\delta_1)$ (cf.~Figure \ref{labelbranches}). 
As $\Gamma(\alpha_{i_0})$ is a regular satellite, $g(\gamma_{i_0})\not=f(\delta_{j_0})$, and hence $\gamma_{i_0,j_0,0}\not=\gamma_{i_0}$. It follows that the monodromy relation associated with the line $L_{\gamma_{i_0,j_0,0}}$ is a simple tangent relation (cf.~(\ref{nearsimple})). Precisely, this relation is given by
\begin{align}\label{rel20}
\xi_{j_0,0}=\xi_{j_0+1,k_{j_0}},
\end{align}
where $k_{j_0}$ is some integer depending only on the first ordering of the elements $\xi_{j_0+1,r_{j_0+1}}$, $0\leq r_{j_0+1} \leq \nu_{j_0+1}-1$ (cf.~Figures~\ref{graphs} and \ref{tangentrelation-case2}). The graphs in Figure~\ref{graphs} are the real graphs of $f$ and $g$ in neighbourhoods of the intervals $[\alpha_{i_0},\alpha_{i_0+1}]$ and $[\beta_{j_0},\beta_{j_0+1}]$, respectively; $c(\xi_{j_0,0})$ and $c(\xi_{j_0+1,k_{j_0}})$ are the centres of the lassos $\xi_{j_0,0}$ and $\xi_{j_0+1,k_{j_0}}$, respectively. The picture on the left-side (respectively, right-side) of Figure \ref{tangentrelation-case2} represents the generators in a neighbourhood of $\beta_{j_0}$ and $\beta_{j_0+1}$ (respectively, in a neighbourhood of $\delta_{j_0}$) at $x=\alpha_{i_0}^+$ (respectively, at $x=\gamma_{i_0,j_0,0}^-$). 

Actually, as $\Gamma(\alpha_{i_0})$ is regular, $\gamma_{i_0,j_0,2q}\notin\{\gamma_{i_0-1},\gamma_{i_0}\}$ for any $0\leq q\leq \lambda_{i_0}-1$, and the monodromy relation associated with the special line~$L_{\gamma_{i_0,j_0,2q}}$ is a simple tangent relation given by
\begin{align}\label{rel2q}
\xi_{j_0,-q}=\xi_{j_0+1,k_{j_0}-q}.
\end{align}
This follows immediately from (\ref{rel20}) and the following observation.

\begin{observation}\label{fundamentalprinciple}
For any $i$ ($1\leq i\leq m$), when $x$ moves on the circle $\vert x-\alpha_{i}\vert=\varepsilon$ by the angle $2\pi/\lambda_{i}$, the centre of each lasso $\xi_{j,r_j}$ ($1\leq j\leq\ell$, $0\leq r_j\leq\nu_{j}-1$) turns on the circle $\vert y-\beta_j\vert = \varepsilon^{\lambda_{i}/\nu_{j}}$ by the angle $2\pi/\nu_{j}$.
\end{observation}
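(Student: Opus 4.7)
The plan is to reduce the claim to a local branch computation near $(\alpha_i,\beta_j)$. I would first use the factorisations $g(x)=(x-\alpha_i)^{\lambda_i}\,\tilde{g}_i(x)$ with $\tilde{g}_i(\alpha_i)\neq 0$ and $f(y)=(y-\beta_j)^{\nu_j}\,\tilde{f}_j(y)$ with $\tilde{f}_j(\beta_j)\neq 0$, so that the defining equation $f(y)=g(x)$ of $C$ reads, in a small neighbourhood of $(\alpha_i,\beta_j)$,
\[
(y-\beta_j)^{\nu_j} \,=\, \frac{\tilde{g}_i(x)}{\tilde{f}_j(y)}\,(x-\alpha_i)^{\lambda_i}.
\]
Setting $D_{i,j}:=\tilde{g}_i(\alpha_i)/\tilde{f}_j(\beta_j)\neq 0$, the right-hand side has the form $(D_{i,j}+O(\varepsilon))(x-\alpha_i)^{\lambda_i}$ whenever $|x-\alpha_i|\leq\varepsilon$ and $|y-\beta_j|$ is of order $\varepsilon^{\lambda_i/\nu_j}$, provided $\varepsilon$ is small enough.

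Next I would extract $\nu_j$-th roots to describe the $\nu_j$ local branches of $C$ above a small punctured disc around $\alpha_i$ as
\[
y-\beta_j \,=\, D_{i,j}^{1/\nu_j}\,\zeta\,(x-\alpha_i)^{\lambda_i/\nu_j}\,\bigl(1+O(\varepsilon)\bigr),\qquad \zeta^{\nu_j}=1.
\]
By construction, the centres of the lassos $\xi_{j,0},\ldots,\xi_{j,\nu_j-1}$ on the generic fibre $L_{\alpha_{i_0}^+}$ are precisely the $\nu_j$ solutions $y$ close to $\beta_j$ of $f(y)=g(\alpha_{i_0}^+)$, one lying on each branch. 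Parametrising $x-\alpha_i=\varepsilon\,e^{\sqrt{-1}\,\theta}$ and substituting gives
\[
y-\beta_j \,=\, D_{i,j}^{1/\nu_j}\,\zeta\,\varepsilon^{\lambda_i/\nu_j}\,e^{\sqrt{-1}\,\lambda_i\theta/\nu_j}\,\bigl(1+O(\varepsilon)\bigr),
\]
so, up to an error vanishing with $\varepsilon$, each centre lies on the circle of radius $|D_{i,j}|^{1/\nu_j}\varepsilon^{\lambda_i/\nu_j}$ about $\beta_j$ and its argument is a linear function of $\theta$ with slope $\lambda_i/\nu_j$. Increasing $\theta$ by $2\pi/\lambda_i$ therefore rotates each centre by exactly $2\pi/\nu_j$, which is the stated claim.

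The only technical points are essentially cosmetic. The radius $\varepsilon^{\lambda_i/\nu_j}$ in the statement carries an implicit positive factor $|D_{i,j}|^{1/\nu_j}$, which the observation absorbs into the topological picture (the relevant circles are isotopic in the fibre complement and this is all that is used in the monodromy arguments). One should also check that the $O(\varepsilon)$ correction does not alter the winding of the centres as $\theta$ sweeps $[0,2\pi/\lambda_i]$; but since the correction is a small perturbation of a non-vanishing quantity, the argument of $y-\beta_j$ differs from $\arg(D_{i,j}^{1/\nu_j}\zeta)+\lambda_i\theta/\nu_j$ by at most $o(1)$ as $\varepsilon\to 0$, so the total angular variation is exactly $2\pi/\nu_j$. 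No step is really a serious obstacle, the only thing that requires care is keeping the lassos (whose definition uses the basepoint $\alpha_{i_0}^+$) identified with the centres of the local branches of $C$ during the isotopy, which is essentially the content of the local model itself.
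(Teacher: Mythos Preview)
Your argument is correct and is exactly the unpacking of the local model that the paper leaves implicit: the Observation is stated without proof because it is an immediate consequence of the local descriptions (\ref{nearsimpleb})--(\ref{ngs}) in Section~\ref{sectionspl}, namely $(y-\beta_j)^{\nu_j}=c(x-\alpha_i)^{\lambda_i}$, and your computation with $x-\alpha_i=\varepsilon e^{\sqrt{-1}\theta}$ is precisely how one reads off the rotation $2\pi/\nu_j$ from this model. Your remarks about the implicit constant $|D_{i,j}|^{1/\nu_j}$ in the radius and the harmless $O(\varepsilon)$ perturbation are accurate and match what the paper's phrase ``topologically described by'' is absorbing.
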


Similarly, if $f(\delta_{j_0})<0$, then, for each $0\leq q\leq \lambda_{i_0}-1$, there exists an~unique vertex $\gamma_{i_0,j_0,2q+1}\in B_{i_0,2q+1}$ such that $g(\gamma_{i_0,j_0,2q+1})=f(\delta_{j_0})$, and by the~same argument as above, the monodromy relation associated with the~line $L_{\gamma_{i_0,j_0,2q+1}}$ is given by
\begin{align}\label{rel2q+1}
\xi_{j_0,h_{j_0}-q}=\xi_{j_0+1,k_{j_0}-q},
\end{align}
where $h_{j_0}$ and $k_{j_0}$ are integers depending only on the first ordering of the elements $\xi_{j_0,r_{j_0}}$ ($0\leq r_{j_0}\leq \nu_{j_0}-1$) and $\xi_{j_0+1,r_{j_0+1}}$ ($0\leq r_{j_0+1} \leq \nu_{j_0+1}-1$). 

\begin{remark}
Note that the relations (\ref{rel2q}) can also be written under the form (\ref{rel2q+1}) by taking $h_{j_0}=0$. 
\end{remark}

Combined with (\ref{rel.alpha.i0}), the relations (\ref{rel2q+1}) imply 
\begin{align*}
\xi_{j_0,h_{j_0}-k}=\xi_{j_0+1,k_{j_0}-k} \quad \mbox{for} \quad k\in\mathbb{Z},
\end{align*}
and therefore,
\begin{align*}
\xi_{j_0,k}=\xi_{j_0+1,k+(k_{j_0}-h_{j_0})} \quad \mbox{for} \quad k\in\mathbb{Z}.
\end{align*}
Then, by reordering the generators $\xi_{j,k}$ successively for $j=2,\ldots,\ell$, we can assume that
\begin{align}\label{rel.gamma.i0k.bis}
\xi_{j_0,k}=\xi_{j_0+1,k} \quad \mbox{for} \quad k\in\mathbb{Z},
\end{align}
and hence, as $j_0$ is arbitrary, we can take, as generators, the elements
\begin{align}
\xi_{k}:=\xi_{j_0,k} \quad \mbox{for} \quad k\in\mathbb{Z}.
\end{align}
Then, the relations (\ref{rel.alpha.i0}) are written as
\begin{align}\label{rel.alpha.i0.bis}
\xi_{k}=\xi_{k+\lambda_{i_0}} 
\quad \mbox{for} \quad k\in\mathbb{Z},
\end{align}
and, by applying Proposition \ref{prop26}, we have
\begin{align}\label{relnu0}
\xi_{k+\nu_0}=\omega\xi_k\omega^{-1} \quad \mbox{for} \quad k\in\mathbb{Z},
\end{align}
where $\omega:=\xi_{\nu_0-1}\ldots\xi_0$. Indeed, by Bezout's identity, there exist $k_1,\ldots,k_{\ell}\in\mathbb{Z}$ such that $\nu_0=k_1\nu_1+\ldots+k_\ell\nu_\ell$. Then, by (\ref{rel.1}), $$\xi_{k+\nu_0}=(\omega_\ell^{k_\ell}\ldots\omega_1^{k_1})\cdot\xi_k\cdot (\omega_\ell^{k_\ell}\ldots\omega_1^{k_1})^{-1},$$
while Proposition \ref{prop26} shows that 
$$\omega_\ell^{k_\ell}\ldots\omega_1^{k_1}=\xi_{\nu_0-1}\ldots\xi_0.$$

\begin{remark}
The relations (\ref{rel.alpha.i0.bis}) and (\ref{relnu0}) associated with the regular satellite $\Gamma(\alpha_{i_0})$ imply that the fundamental group $\pi_1({\mathbb{C}^2\setminus C})$ is a quotient of the group $G(\nu_0;\lambda_{i_0})$.
\end{remark}

\begin{figure}[H]
\includegraphics[width=7.4cm,height=1.5cm]{./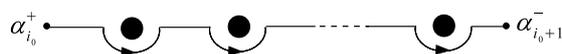}
\caption{The modified line segment $(\alpha_{i_0}^+,\alpha_{i_0+1}^-)$}
\label{modifiedsegment}
\end{figure}

\begin{figure}[H]
\includegraphics[width=11cm,height=5.3cm]{./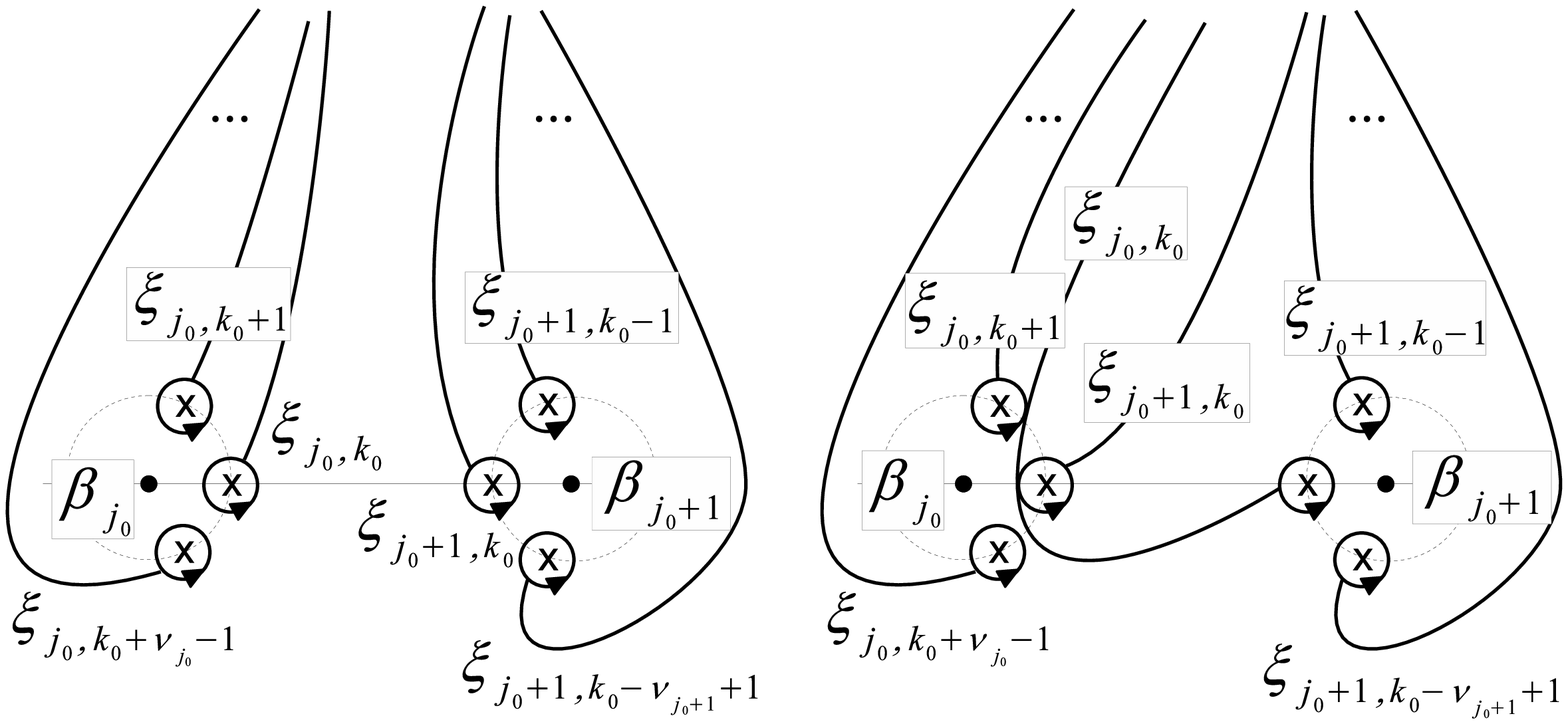}
\caption{Deformation of the generators when $0<f(\delta_{j_0})<g(\gamma_{i_0})$}
\label{deformationgenerators}
\end{figure}

Now, let us consider the monodromy relations associated with the other satellites. For simplicity, we still assume $g(\gamma_{i_0})>0$. 
We start with the satellite $\Gamma(\alpha_{i_0+1})$ and first look for the relations around the line $L_{\alpha_{i_0+1}}$. For this purpose, we need to know how the generators are deformed when $x$ moves along the `modified' line segment $(\alpha_{i_0}^+,\alpha_{i_0+1}^-)$. Here, `modified' means that $x$ makes a half-turn counterclockwise around each vertex of $\Gamma\cap (\alpha_{i_0}^+,\alpha_{i_0+1}^-)$ corresponding to a special line (cf.~Figure \ref{modifiedsegment}).
Take an element $j_0$ such that $1\leq j_0\leq \ell-1$. If $0<f(\delta_{j_0})<g(\gamma_{i_0})$, then there are exactly two vertices $\gamma_{i_0,j_0,0}\not=\gamma_{i_0}$ and $\gamma_{i_0+1,j_0,2q_0}\not=\gamma_{i_0}$ (for some $0\leq q_0\leq\lambda_{i_0+1}-1$) on the line segment $(\alpha_{i_0}^+,\alpha_{i_0+1}^-)$ that correspond to the special lines of the pencil associated with the critical value $f(\delta_{j_0})$ (i.e., $g(\gamma_{i_0,j_0,0})=f(\delta_{j_0})$ and $g(\gamma_{i_0+1,j_0,2q_0})=f(\delta_{j_0})$). The first one $\gamma_{i_0,j_0,0}$ is in $\Gamma(\alpha_{i_0})$ and the second one $\gamma_{i_0+1,j_0,2q_0}$ is in $\Gamma(\alpha_{i_0+1})$. 
Therefore, when $x$ moves along the modified line segment $(\alpha_{i_0}^+,\alpha_{i_0+1}^-)$, the generators are deformed as in Figure~\ref{deformationgenerators}. The picture on the left-side of the figure represents the generators at $x=\alpha_{i_0}^+$ (i.e., before the deformation). The picture on the right-side represents the generators at $x=\alpha_{i_0+1}^-$ (i.e., after the deformation). 
However, by (\ref{rel.gamma.i0k.bis}), we can suppose that the generators in the fibre $x=\alpha_{i_0+1}^-$ are still the same as in the fibre $x=\alpha_{i_0}^+$. In other words, the picture on the left-side of Figure~\ref{deformationgenerators} also represents the generators at $x=\alpha_{i_0+1}^-$.
Hence, by the same argument as above, the monodromy relations associated with the special line $L_{\alpha_{i_0+1}}$ give the relations
\begin{align}\label{relalphai0+1}
\xi_{k}=\xi_{k+\lambda_{i_0+1}} \quad\mbox{for}\quad k\in\mathbb{Z}.
\end{align}

We get the same relations if $g(\gamma_{i_0})<f(\delta_{j_0})$ or if $f(\delta_{j_0})<0$. Indeed, in these two cases, the set $g^{-1}(f(\delta_{j_0}))\cap (\alpha_{i_0}^+,\alpha_{i_0+1}^-)$ is empty, and therefore the configuration of the generators is identical on the fibres $x=\alpha_{i_0+1}^-$ and $x=\alpha_{i_0}^+$.

\begin{figure}[H]
\includegraphics[width=11cm,height=5.3cm]{./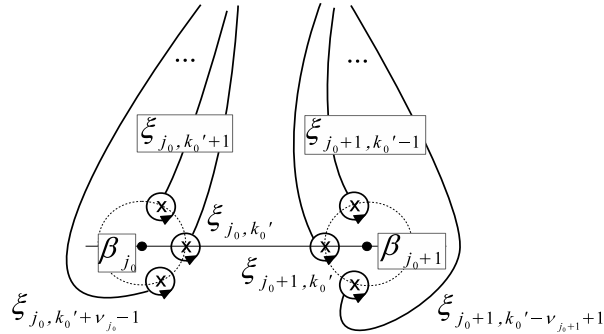}
\caption{Generators at $x=\alpha_{i_0+1}^+$ and $x=\alpha_{i_0+2}^-$ when $g(\gamma_{i_0})$, $g(\gamma_{i_0+1})$ and $f(\delta_{j_0})$ are $>0$}
\label{genhalfturn}
\end{figure}

The monodromy relations around the special lines corresponding to the vertices located on the branches of $\Gamma(\alpha_{i_0+1})$ do not give any new relation. This can be directly shown easily but it is not necessary. In fact, as we shall see below, it suffices to collect the monodromy relations associated with the special lines $L_{\alpha_{i}}$ for all $i$, $1\leq i\leq m$. We already know that, for $i=i_0$ and $i_0+1$, the monodromy relations around $L_{\alpha_{i}}$ are given by $\xi_{k}=\xi_{k+\lambda_{i}}$ for all $k\in\mathbb{Z}$. In fact, this is true for any $i$. For instance, let us show it for $i=i_0+2$.
For this purpose, we need to know how the generators are deformed when $x$ makes a half-turn on the circle $\vert x-\alpha_{i_0+1}\vert=\varepsilon$ from $\alpha_{i_0+1}^-$ to $\alpha_{i_0+1}^+$, and then moves along the modified line segment $(\alpha_{i_0+1}^+,\alpha_{i_0+2}^-)$.
Again, take $j_0$ such that $1\leq j_0\leq\ell-1$, and for simplicity assume that $g(\gamma_{i_0})$, $g(\gamma_{i_0+1})$ and $f(\delta_{j_0})$ are positive. (The other cases are similar and left to the reader.)
By Observation~\ref{fundamentalprinciple}, when $x$ makes a half-turn on the circle $\vert x-\alpha_{i_0+1}\vert=\varepsilon$ from $\alpha_{i_0+1}^-$ to $\alpha_{i_0+1}^+$, the generators are deformed as in Figure~\ref{genhalfturn}, where $k'_0\in\mathbb{Z}$. That is, the configuration of the generators on the fibre $x=\alpha_{i_0+1}^+$ is just the parallel translation of that on the fibre $x=\alpha_{i_0+1}^-$. When $x$ moves along the modified line segment $(\alpha_{i_0+1}^+,\alpha_{i_0+2}^-)$, the generators are still as in Figure \ref{genhalfturn}. The proof of this fact is as above except that now $\gamma_{i_0+1}\in (\alpha_{i_0+1}^+,\alpha_{i_0+2}^-)$ may correspond to an exceptional singularity (in particular, we may have $f(\delta_{j_0})=g(\gamma_{i_0+1})$). But in this case, when $x$ makes a half-turn on the circle $\vert x-\gamma_{i_0+1}\vert=\varepsilon$ from $\gamma_{i_0+1}^-$ to $\gamma_{i_0+1}^+$, the generators remains unchanged, as, by~(\ref{rel.gamma.i0k.bis}), $\xi_{j_0,k'_0}=\xi_{j_0+1,k'_0}$.
Finally, exactly as above, the monodromy relations associated with the special line $L_{\alpha_{i_0+2}}$ give the relations
\begin{align*}
\xi_{k}=\xi_{k+\lambda_{i_0+2}} 
\quad\mbox{for}\quad k\in\mathbb{Z}.
\end{align*}

This argument can be repeated for all the other values of $i$, $1\leq i\leq m$, so that the monodromy relations associated with the special line $L_{\alpha_{i}}$ for any $i$, $1\leq i\leq m$, are written as
\begin{align}\label{rel.alpha.i}
\xi_{k}=\xi_{k+\lambda_{i}} 
\quad\mbox{for}\quad k\in\mathbb{Z}.
\end{align}

By Proposition \ref{prop29}, the collection of relations (\ref{rel.alpha.i}), for $1\leq i\leq m$, and the relation (\ref{relnu0}) are equivalent to 
\begin{equation*}\label{rel.alli}
\left\{
\begin{aligned}
& \xi_{k}=\xi_{k+\lambda_{0}} \\
& \xi_{k+\nu_0}=\omega\xi_k\omega^{-1} 
\end{aligned}
\right.
\quad \mbox{for} \quad k\in\mathbb{Z}.
\end{equation*}
In particular, this means that the fundamental group $\pi_1(\mathbb{C}^2\setminus C)$ is presented by the generators $\xi_k$ ($k\in\mathbb{Z}$) and $\omega$ and by a set of relations that includes the following relations:
\begin{align}
\label{eq1} & \omega=\xi_{\nu_0-1}\ldots\xi_0,\\
\label{eq2} & \xi_{k+\lambda_{0}}=\xi_{k}\quad (k\in\mathbb{Z}),\\
\label{eq3} & \xi_{k+\nu_{0}}=\omega\xi_{k}\omega^{-1}\quad (k\in\mathbb{Z}).
\end{align}
In other words, $\pi_1(\mathbb{C}^2\setminus C)$ is a quotient of the group $G(\nu_0;\lambda_0)$. 

Now, consider the family $\{C_t\}_{0\leq t\ll 1}$ of $\mathbb{R}$-join type curves, where $C_t$ is defined by the equation
\begin{displaymath}
f(y)=(1-t)g(x).
\end{displaymath}
For any $0<t\ll 1$, it is easy to see that the curve $C_t$ has only inner singularities. 
Therefore, by the degeneration principle \cite{O3,Z}, for any sufficiently small $t>0$, there is a canonical epimorphism 
\begin{displaymath}
\psi_t\colon \pi_1(\mathbb{C}^2\setminus C)=\pi_1(\mathbb{C}^2\setminus C_0) \twoheadrightarrow \pi_1(\mathbb{C}^2\setminus C_t)\simeq G(\nu_0;\lambda_0).
\end{displaymath}
Let us recall briefly how $\psi_t$ is defined. Let $L_\infty$ be the line at infinity, and set $C':=C\cup L_\infty$ and $C'_t:=C_t\cup L_{\infty}$. Pick a sufficiently small regular neighbourhood $N$ of $C'$ in $\mathbb{P}^2$ so that $\imath\colon\mathbb{P}^2\setminus N\hookrightarrow \mathbb{P}^2\setminus C'=\mathbb{C}^2\setminus C$ is a homotopy equivalence, and take a sufficiently small $t$ so that $C_t'$ is contained in $N$. Then, $\psi_t$ is defined by taking the composition of ${\imath_\sharp}^{-1} \colon \pi_1(\mathbb{C}^2\setminus C) \to \pi_1(\mathbb{P}^2\setminus N)$ with the homomorphism induced by the inclusion $\mathbb{P}^2\setminus N\hookrightarrow \mathbb{P}^2\setminus C'_t = \mathbb{C}^2\setminus C_t$.
To distinguish the generators, we write $\xi_k(t)$ ($k\in \mathbb{Z}$) for the generators of $\pi_1(\mathbb{C}^2\setminus C_t)$, which are represented by the same loops as $\xi_k$. Note that $\psi_t(\xi_k)=\xi_k(t)$. As $C_t$ is generic, $\pi_1(\mathbb{C}^2\setminus C_t)$ is presented by the generators $\xi_k(t)$ ($k\in\mathbb{Z}$) and $\omega(t):=\xi_{\nu_0-1}(t)\ldots\xi_0(t)$ and by the relations (\ref{eq1})--(\ref{eq3}), replacing $\xi_k$ by $\xi_k(t)$ and $\omega$ by $\omega(t)$. (We may assume that $N\cap L_{\alpha_{i_0}^+}$ is a copy of $d$ disjoint (topologically) tiny $2$-disks so that $\xi_k$ ($0\leq k\leq d-1$) are free generators of $\pi_1(L_{\alpha_{i_0}^+}\setminus N)$.)
This implies that $\ker \psi_t$ is trivial, and hence
\begin{displaymath}
\pi_1(\mathbb{C}^2\setminus C)\simeq G(\nu_0;\lambda_0).
\end{displaymath}
(In particular, the branches of~the satellites $\Gamma(\alpha_i)$, $i\not=i_0$, do not give any new relation.)

As for the fundamental group $\pi_1(\mathbb{P}^2\setminus \widetilde C)$, we proceed as follows. If $d\geq d'$, then the base locus of the pencil $X=\gamma Z$ ($\gamma\in\mathbb{C}$) in $\mathbb{P}^2$ does not belong to the curve, and therefore the group $\pi_1(\mathbb{P}^2\setminus \widetilde C)$ is obtained from the above presentation of $\pi_1(\mathbb{C}^2\setminus C)$ by adding the vanishing relation at infinity $\omega_1\ldots\omega_\ell=e$. By Proposition \ref{prop26}, the relations (\ref{eq1}) and (\ref{eq3}) imply $\omega_j=\omega^{\nu_j/\nu_0}$ ($1\leq j\leq \ell$). Therefore, the relation $\omega_1\ldots\omega_\ell=e$ can also be written~as 
\begin{displaymath}
\omega^{d/\nu_0}=e.
\end{displaymath}
It follows that $\pi_1(\mathbb{P}^2\setminus \widetilde C)\simeq G(\nu_0;\lambda_0;d/\nu_0)$.

If $d'\geq d$, then we consider again the family $\{C_t\}_{0\leq t\ll 1}$, where $C_t$ is defined by the equation $f(y)=(1-t)g(x)$. We use the same regular neighbourhood $N$ and the same isomorphism $\psi_t\colon \pi_1(\mathbb{C}^2\setminus C)\to \pi_1(\mathbb{C}^2\setminus C_t)$ for a sufficiently small $t>0$. To compute $\pi_1(\mathbb{C}^2\setminus C)$, this time we consider the pencil given by the horizontal lines $y=\delta$, where $\delta\in\mathbb{C}$. We fix a generic line $y=\delta_0$ and we choose geometric generators $\rho_k$ ($0\leq k\leq d'-1$) as above so that the $\rho_k$'s give generators of the fundamental group of the generic fibre of each complement $\mathbb{P}^2\setminus N$, $\mathbb{C}^2\setminus C$ and $\mathbb{C}^2\setminus C_t$ simultaneously. Then we define elements $\tau$ and $\rho_k$, for $k\in\mathbb{Z}$, in the same way as we defined the elements $\omega$ and $\xi_k$ ($k\in\mathbb{Z}$) above. Now, as $\psi_t$ is an isomorphism and $C_t$ is generic, the generating relations for each group $\pi_1(\mathbb{C}^2\setminus C_t)$, $\pi_1(\mathbb{C}^2\setminus C)$ and $\pi_1(\mathbb{P}^2\setminus N)$ are given by
\begin{align*}
& \tau=\rho_{\lambda_0-1}\ldots\rho_0,\\
& \rho_{k+\nu_{0}}=\rho_{k}\quad (k\in\mathbb{Z}),\\
& \rho_{k+\lambda_{0}}=\tau\rho_{k}\tau^{-1}\quad (k\in\mathbb{Z}).
\end{align*}
As $d'\geq d$, the base locus of the pencil $Y=\delta Z$ ($\delta\in\mathbb{C}$) in $\mathbb{P}^2$ does not belong to $\widetilde C$, and hence the group $\pi_1(\mathbb{P}^2\setminus \widetilde C)$ is obtained from the above presentation of $\pi_1(\mathbb{C}^2\setminus C)$ by adding the vanishing relation at infinity $\tau^{d'/\lambda_0}=e$. Finally, we get $\pi_1(\mathbb{P}^2\setminus \widetilde C)\simeq G(\lambda_0;\nu_0;d'/\lambda_0)$.

\section{Applications}

In this section, we give two applications of Theorem~\ref{mt}.

\subsection{Maximal irreducible nodal curves}

An irreducible curve is said to be \emph{nodal} if it has only
node singularities. By Pl\"ucker's formula, an irreducible nodal curve of degree $d$ has at most $(d-1)(d-2)/2$ nodes. An irreducible nodal curve is called \emph{maximal} if it has exactly $(d-1)(d-2)/2$ nodes (equivalently, if its genus is $0$). A method to construct such curves is given in \cite{O1}. There, the irreducibility is obtained using the braid group action. Hereafter, we repeat the construction of \cite{O1} but apply Theorem \ref{mt} to show the irreducibility.

For simplicity, let us suppose that $d=2n+1$, $n\in\mathbb{Z}$. (The construction is similar when $d$ is even.) Consider the \emph{Chebyshev} polynomial of degree $d$, which is defined by $T_d(z):=\cos(d\arccos(z))$. This polynomial has $2n+1$ simple real roots $\beta_1<\ldots<\beta_{2n+1}$ and $2n$ critical points $\delta_1,\ldots,\delta_{2n}$, with $\delta_j\in (\beta_j,\beta_{j+1})$, such that $T_d(\delta_1)=T_d(\delta_3)=\ldots=T_d(\delta_{2n-1})=1$ and $T_d(\delta_2)=T_d(\delta_4)=\ldots=T_d(\delta_{2n})=-1$.
Now, take a small deformation $\widetilde T_d(z)$ of $T_d(z)$ so that:
\begin{enumerate}
\item
$\widetilde T_d(z)$ has $n$ critical points $\gamma_1,\gamma_3,\ldots,\gamma_{2n-1}$ such that $\widetilde T_d(\gamma_1)=\widetilde T_d(\gamma_3)=\ldots=\widetilde T_d(\gamma_{2n-1})=1$;
\item
$\widetilde T_d(z)$ has $n-1$ critical points $\gamma_2,\gamma_4,\ldots,\gamma_{2n-2}$ such that $\widetilde T_d(\gamma_2)=\widetilde T_d(\gamma_4)=\ldots=\widetilde T_d(\gamma_{2n-2})=-1$;
\item
$\widetilde T_d(z)$ has a critical point $\gamma_{2n}$ with $\widetilde T_d(\gamma_{2n})<-1$.
\end{enumerate}
The existence of such a polynomial $\widetilde T_d(z)$ is due to R.~Thom \cite{T}. It has $2n+1$ simple real roots $\alpha_1<\ldots<\alpha_{2n+1}$, and $\gamma_i\in (\alpha_i,\alpha_{i+1})$. Then, consider the $\mathbb{R}$-join-type curve $C$ defined by $T_d(y)=\widetilde T_d(x)$. Clearly, the satellite $\Gamma(\alpha_{2n+1})$ (of the bifurcation graph of $C$ with respect to $\widetilde T_d$) is regular and the numbers $\nu_0,\lambda_0$ which appear in Theorem \ref{mt} are equal to $1$. Hence, $\pi_1(\mathbb{C}^2\setminus C)\simeq\mathbb{Z}$ and $\pi_1(\mathbb{P}^2\setminus \widetilde C)\simeq\mathbb{Z}_d$. In particular, the curve $C$ is irreducible. The number of nodes is given by the cardinality of the set
\begin{align*}
\{(\gamma_{2i-1},\delta_{2j-1})\mid 1\leq i,j\leq n\} \cup
\{(\gamma_{2i},\delta_{2j})\mid 1\leq i\leq n-1,\, 1\leq j\leq n\},
\end{align*}
that is,
\begin{align*}
n^2+(n-1)n=\frac{(d-1)(d-2)}{2}.
\end{align*}
In other words, $C$ is a maximal irreducible nodal curve.

\subsection{Curves with node and cusp singularities}

Let $C\colon f(y)=g(x)$ be an $\mathbb{R}$-join-type curve with only nodes and cusps as singularities. For simplicity, we suppose that $C$ has degree $d=6n$, $n\in\mathbb{Z}$. The maximal number of cusps on such a curve is obtained when $f$ and $g$ have the form $f(y)=a(y-\beta_1)^3(y-\beta_2)^3\ldots(y-\beta_{2n})^3$ and $g(x)=b(x-\alpha_1)^2(x-\alpha_2)^2\ldots(x-\alpha_{3n})^2$, in which case we have $6n^2$ cusps. (As usual, we suppose $\beta_j<\beta_{j+1}$ and $\alpha_i<\alpha_{i+1}$ for all $i,j$.) By the result of R.~Thom \cite{T}, $f$ can be chosen so that its $2n-1$ critical points $\delta_1<\ldots<\delta_{2n-1}$ satisfy $f(\delta_1)=f(\delta_3)=\ldots=f(\delta_{2n-1})=-1$ and $f(\delta_2)=f(\delta_4)=\ldots=f(\delta_{2n-2})=1$. Similarly, $g$ can be chosen so that
its $3n-1$ critical points $\gamma_1<\ldots<\gamma_{3n-1}$ satisfy $g(\gamma_1)=g(\gamma_2)=\ldots=g(\gamma_{3n-2})=-1>g(\gamma_{3n-1})$. In this case, the curve also has $n(3n-2)$ nodes. Clearly, the satellite $\Gamma(\alpha_{3n})$ (of the bifurcation graph of $C$ with respect to $g$) is regular, and hence, by Theorem \ref{mt}, $\pi_1(\mathbb{C}^2\setminus C)\simeq G(3;2)\simeq B(3)$ (the braid group on $3$ strings), while $\pi_1(\mathbb{P}^2\setminus \widetilde C)\simeq G(3;2;2n)$. Note that, by Theorem (2.12) of \cite{O}, we have a central extension
$$0\rightarrow\mathbb{Z}_n\rightarrow G(3;2;2n) \rightarrow \mathbb{Z}_3*\mathbb{Z}_2 \rightarrow 0,$$
where $\mathbb{Z}_n$ is generated by $\omega^2$.

\end{document}